\DeclareMathAlphabet{\mathpzc}{OT1}{pzc}{m}{it}
\theoremstyle{plain}
\newtheorem{theorem}{Theorem}[section]
\newtheorem*{theorem*}{Theorem}
\newtheorem{lemma}[theorem]{Lemma}
\newtheorem*{claim*}{Claim}
\newtheorem{proposition}[theorem]{Proposition}
\newtheorem{corollary}[theorem]{Corollary}
\theoremstyle{definition}
\newtheorem{definition}[theorem]{Definition}
\newtheorem{example}[theorem]{Example}
\newtheorem{remark}[theorem]{Remark}
\newcommand{\ignore}[1]{}
\begin{document}
\setlength{\parindent}{0pt}

\author{S. Pumpl\"un}

\email{susanne.pumpluen@nottingham.ac.uk}
\address{School of Mathematical Sciences\\
University of Nottingham\\ University Park\\ Nottingham NG7 2RD\\
United Kingdom }

\date{\today}

\keywords{Nonassociative algebras, crossed products, linear codes.}

\subjclass[2010]{Primary: 17A35 Secondary: 17A60, 17A99}

\title[Menichetti's nonassociative $G$-crossed product algebras]
{Menichetti's nonassociative $G$-crossed product algebras  and Menichetti codes}

\maketitle

\begin{abstract}
We  demonstrate the use of nonassociative algebras in code design and consider codes with nonassociative ambient algebras other than the well-known skew polycyclic codes.
 We define and investigate Menichetti algebras and identify them as important elements in the semiassociative Brauer monoid. Menichetti algebras can be viewed as generalisations of $G$-crossed product algebras; they are $n^2$-dimensional algebras with an $n$-dimensional Galois field extension $K/F$ with Galois group $G$ in their nucleus.

 We then  extend the class of linear error-correcting codes obtained from left principal ideals in their ambient algebra using the opposite algebras of Menichetti algebras as ambient algebra. With the right choice of algebra they display symmetric and cyclic properties which promise efficient decoding algorithms.
  Well-known examples of such Menichetti codes are those skew constacyclic codes which have a nonassociative $G$-crossed product algebra (a nonassociative cyclic algebra) as their ambient algebra.
    
\end{abstract}

\section{Introduction}

Let $F$ be a field. All algebras we will consider are unital. An associative algebra $A$ over $F$ is called a \emph{$G$-crossed product algebra} or
\emph{crossed product algebra}, if it contains a maximal field extension $K/F$ which is Galois
with Galois group $G = {\rm Gal}(K/F)$. Crossed product algebras play an important role in
the theory of central simple algebras, since every element in the Brauer
group of $F$ is similar to a crossed product algebra.

What is a reasonable definition of a nonassociative  $G$-crossed product algebra? The first paper addressing this question is by A. A. Albert \cite{A1}. The construction defined therein is very general and includes, for instance, even the iterated Cayley Dickson doubling process.

 In 1975, Menichetti constructed a class of semifields of dimension $n^2$ over a finite field $F$ \cite{M}.
 We generalise Menichetti's approach  in Section \ref{sec:classical}.
Menichetti algebras are unital central nonassciative algebras of dimension $n^2$ that have a  Galois field extension $K/F$ of degree $n$ with Galois group $G$ in their nucleus.  The field $K$ is even  a maximal commutative subalgebra of the  nucleus of ${\rm Men}(K/F,k_0,\dots, k_{n-1})$. Thus any Menichetti algebra could be seen as nonassociative version of a crossed product algebra, assuming we use the above definition of an associative crossed product algebra that employs maximal subfields.

In the first step, we define  nonassociative algebras ${\rm Men}(K/F,k_0,\dots, k_{n-1})$ of dimension $n^2$ over $F$, where
 $K/F$ is any Galois field extensions  with Galois group $G$; $G$ does not have to be cyclic.  We then construct nonassociative generalised Menichetti algebras ${\rm Men}(D,\sigma,k_0,\dots, k_{n-1})$ of dimension $n^2m^2$ over $F$ employing a central simple algebra $D$ of degree $m$ over some separable field extension $C$ of $F$, and some $\sigma\in {\rm Aut}(D)$ such that
$\sigma|_{C}$ has finite order $n$. Both our generalisations  make up a large class of non-trivial  semiassociative algebras with nucleus $K$, respectively $D$, and play an important role in the nonassociative Brauer monoid \cite{BHMRV}; in particular, the tensor product of a central simple associative algebra and a Menichetti algebra yields a generalised Menichetti algebra.

 Menichetti algebras form a class of nonassociative algebras of formidable ``size'', and depending on the choice of scalars used in their definition they can be made flexible or power associative, or even associative.

 Although we could argue that each Menichetti algebra can be seen as a nonassociative crossed product, we will use a different definition of a nonassociative $G$-crossed product algebra in this paper. 
 A prominent example of such a nonassociative crossed product algebra is the nonassociative cyclic algebra $(K/F,\sigma,a)$ where $K/F$ is a cyclic Galois field extension with Galois group $G$ generated by $\sigma$, and $a\in K\setminus F$ \cite{St, W}. This algebra canonically generalises the  cyclic central simple algebra $(K/F,\sigma,a)$, which is an associative crossed product algebra.  The principal left ideals of a nonassociative crossed product algebra $(K/F,\sigma,a)$ are in one-one correspondence with the well-known $(\sigma,a)$-skew
constacyclic codes of length $m=[K:F]$ over $K$. This can be used in classifications of these skew constacyclic codes, e.g. see \cite{NevPum2025, Pum2025}.

The overall push to study codes over more general finite rings naturally leads to the question what is the correct and most useful generalisation of linear error-correcting codes over rings which do not necessarily have a cyclic automorphism group? One answer is to use
different types of associative crossed products as ambient spaces, where $G$ need not be cyclic. These play an increasingly important role in
the theory of linear error-correcting codes, with crossed product codes being introduced as those two-sided ideals which are lattices of associative crossed products  by Ginosar and Moreno \cite{GM}. However, apart from the nonassociative cyclic algebras,
so far only associative crossed products have been employed, using twisted group algebras over finite fields, or skew group rings as the ambient spaces \cite{DW21, G, GM, SU25}. There is some overlap between Menichetti algebras and our $G$-crossed product algebras when $G$ is cyclic.

 We determine when Menichetti algebras are division algebras in  Section  \ref{sec:results}. Since division algebras only have trivial two-sided ideals, these results give valuable information on which algebras to use in code constructions if we are only looking for codes associated with two-sided ideals, as in \cite{GM}. We then look at the special cases of Menichetti algebras of degree three and four  in Sections \ref{sec:three} and  \ref{sec:4} to obtain examples. 
 We include  (more restrictively defined) nonassociative $G$-crossed product algebras of degree four to illustrate their different behaviour to  Menichetti algebras.
 
 When $n=[K:F]$, ${\rm Gal}(K/F)=\langle \sigma\rangle $ is cyclic, $k_0=\dots =k_{n-1}=1$ and $k_{n-1}=a$, Menichetti algebras are the opposite algebras of the nonassociative Petit algebras $K[t;\sigma]/K[t;\sigma](t^n-a)$.
 These algebras are exactly the nonassociative cyclic algebras $(K/F,\sigma,a)$, i.e.  the well-known associative classical central simple $G$-crossed product algebras when $a\in F^\times$, and the proper nonassociative cyclic algebras when $a\in K\setminus F$ \cite{BP19}.
 This, however, is the ambient algebra of all  $(\sigma,a)$-skew constacyclic codes over $K$ of length $n$, which are in one-to-one correspondence with the  principal left ideals in $K[t;\sigma]/K[t;\sigma](t^n-a)$  \cite{Pum2017, Pum2025}.

This was observed in \cite{GM} when $(K/F,\sigma,a)=K[t;\sigma]/K[t;\sigma](t^n-a)$ is an associative crossed product. The sometimes prevailing  claim that \emph{all} skew constacyclic codes are associative $G$-crossed product codes as defined in \cite{GM} is misleading, though, and not even true when we also consider the nonassociative crossed product ambient algebras $(K/F,\sigma,d)$.

   We use the opposite algebras of  Menichetti algebras as the ambient algebras to define linear Menichetti codes that correspond to their principal left ideals in Section \ref{sec:codes}. The linear codes which correspond to principal left ideals have one generator, corresponding to the generator  of the left ideal and display certain symmetric or even cyclic properties when the Menichetti algebra is chosen wisely. These cyclic properties should yield codes with good minimum Hamming distances. Since they can be described by a single generator, this greatly reduces their storage and implementation complexity. This will lead to good linear codes that will be discussed in a forthcoming paper \cite{Pum25.2}.
    Most importantly, this approach will allow for code constructions that can use alphabets which do not just have a cyclic automorphism group, which will broaden the pool of available linear codes with cyclic properties over many finite rings.

We  define $(G,d)$-constacyclic  codes which are the obvious generalisations of skew constacyclic codes (Example \ref{ex:cyclic}) as an example of a Menichetti code which uses abelian noncyclic automorphism groups in their definition.
Since many finite rings do not have a cyclic automorphism group, our approach allows  more general code constructions, where the algebraic structure of the ambient algebra. The Hamming weight preserving isomorphisms of the ambient algebra can then be used to investigate and classify the ensuing codes.

  Similar approaches are addressed in  \cite{DW21, DS,  G, GM, SU25}, but use only associative $G$-crossed product ambient spaces like twisted group algebras over finite fields or skew group rings.  For instance, all perfect linear codes are twisted group codes \cite{DW21}. It is now easy to see that the ambient algebra of a twisted group code is an associative Menichetti algebra.
  Then there are the skew $G$-codes,  a family of error-correcting codes defined as ideals in a skew group ring,
   where the ring is a finite commutative Frobenius ring and $G$ a finite group.
   Additionally, 
   they encompass a large number of the known ``associative'' constructions in the literature for self-dual codes.

 Since Menichetti algebras are a ``very large'' class of algebras, restrictions to subclasses with  matrices that are good for code design is one of the challenges we need to address.
 Our approach mirrows the construction of skew polycyclic codes using nonassociative  ambient Petit algebras $S[t;\sigma]/S[t;\sigma]f$ \cite{Pum2017, Pum2025}, and polycyclic codes using the associative ambient algebras $S[t]/S[t]f$.  There are three setups that are of special interest.
\\\\
 1.
 $S_0=F$ is a finite field. Then all Galois extensions of $F$ will be cyclic. Nonetheless, we still obtain new linear  codes over $F$ as not all ambient Menichetti algebras are nonassociative cyclic algebras, which are the ambient algebras of skew constacyclic codes.
\\ 2. The code alphabet $S$ is a finite ring and ${\rm Aut}_{S_0}(S)$ is not cyclic.
\\ 3. We choose $\sigma=id$. Then we obtain Menichetti codes which are  generalisation of constacyclic codes which have  the associative quotient algebras $S[t]/S[t](t^n-a)$ as ambient algebras.
\\\\
A few very special cases of some of the results in Section  \ref{sec:results} appeared in the PhD thesis of A. Steele \cite{St}.

\section{The construction} \label{sec:classical}

\subsection{Preliminaries} Let $F$ be a field.  A \emph{nonassociative algebra} $A$ over $F$ is an $F$-vector space together with
 an $F$-bilinear map $A\times A\to A$, $(x,y) \mapsto x \cdot y$, denoted simply by juxtaposition $xy$, which is
the multiplication of $A$.
 $A$ is called \emph{unital} if there is an element in $A$, denoted by 1, such that $1x=x1=x$ for all $x\in A$.
We only consider finite-dimensional unital algebras and call such an algebra a \emph{proper nonassociative algebra} if it is not associative.
 An algebra $A\not=0$ is called a {\it division algebra} if for any $0\not=a\in A$,
the left and right multiplication  with $a$, $L_a(x)=ax$ and $R_a(x)=xa$, are bijective.
Since our $A$ is finite-dimensional as an $F$-vector space,  $A$ is a division algebra if and only if $A$ has no zero divisors.
 The {\it left, middle} and \emph{right nucleus} of $A$ are defined as ${\rm
Nuc}_l(A) = \{ x \in A \, \vert \, [x, A, A]  = 0 \}$,  ${\rm Nuc}_m(A) = \{ x \in A \, \vert \, [A, x, A]  = 0 \}$
${\rm Nuc}_r(A) = \{ x \in A \, \vert \, [A,A, x]  = 0 \}$, respectively, where $[x, y, z] =
(xy) z - x (yz)$ is the {\it associator}. These nuclei are associative
subalgebras of $A$. Their intersection
 ${\rm Nuc}(A) = \{ x \in A \, \vert \, [x, A, A] = [A, x, A] = [A,A, x] = 0 \}$ is called the {\it nucleus} of $A$.
The nucleus of $A$ is an associative subalgebra of $A$,
and $x(yz) = (xy) z$ whenever one of the elements $x, y, z$ is in
${\rm Nuc}(A)$. The \emph{commutator} is
 ${\rm Comm}(A)=\{x\in A\,|\, xy=yx \text{ for all } y\in A\}$ and the
 {\it center} of $A$ is ${\rm C}(A)=\{x\in \text{Nuc}(A)\cap {\rm Comm}(A)$.
 A nonassociative algebra is called a \emph{proper} nonassociative algebra if it is not associative.
 The algebra $A $ is called \emph{simple}, if $A\not =\{0\}$ and $A$ has no ideals apart from $\{0\}$ and $A$, and \emph{central} or \emph{$F$-central}, if $C(A)=F$.

 An algebra $A$ over $F$ is called an  \emph{\'etale algebra} if it is a finite direct product of finite separable field extensions of $F$.  An \'etale algebra is semisimple and the class of \'etale algebras is closed under tensor products.

A finite dimensional nonassociative $F$-central algebra $A$ is called \emph{semiassociative} if its nucleus has an \'etale $F$-subalgebra  $E$, such that $A$ is cyclic and faithful
over $E\otimes_F E$ via the action $(e\otimes e')a=eae'$ for all $a\in A$, $e,e'\in E$ \cite{BHMRV}.
The dimension of a semiassociative algebra $A$ is a square \cite[Corollary 3.4]{BHMRV} and the root of this dimension is called the
\emph{degree} of $A$. Every associative central simple algebra of degree $n$ has a maximal \'etale subalgebra $E$ of dimension $n$ and is semiassociative.

If $A$ is a
semiassociative algebra with respect to one \'etale subalgebra of its nucleus, then it is
 semiassociative
with respect to all \'etale subalgebras of its nucleus \cite[Proposition 3.6]{BHMRV}.
 If A is semiassociative of degree $n$, then any $n$-dimensional
\'etale subalgebra $E$ of ${\rm Nuc}(A)$ is a maximal commutative subalgebra of $A$ \cite[Corollary 7.3]{BHMRV}.
A semiassociative algebra $A$ over $F$ is called \emph{semicentral}, if all of the simple components of the semisimple quotient $\sigma(A)={\rm Nuc}(A)/J({\rm Nuc}(A))$ are $F$-central \cite[Definition 16.1]{BHMRV}.
 A semiassociative algebra is \emph{homogeneous} if it is
semicentral, and the atoms are all Brauer equivalent to each other \cite[Definition 17.2]{BHMRV}.

Two semiassociative algebras $A$ and $B$ over $F$ are  \emph{Brauer equivalent}, if there exist skew matrix algebras $M_n(F;c)$ and $M_m(F,c')$
such that $A\otimes_F M_n(F;c) \cong B \otimes_F M_m(F;c') $. The \emph{semiassociative Brauer monoid} $Br^{sa}(F)$ is the set of equivalence classes with respect to Brauer equivalence, with product $[A]^{sa}[B]^{sa}=[A\otimes_F B]^{sa}$ and unit element $[F]^{sa}$.

\subsection{Menichetti algebras}
We generalise the original definition of Menichetti algebras over finite fields given  in \cite{M} as follows.
Let $\mathbb{Z}_n$ be the ring of integers modulo $n$.  Let  $K/F$ be a Galois field extension  of degree $n$ with abelian Galois group, let us denote its elements by $\tau_i$ with $\tau_0=id$, so
$${\rm Gal}(K/F)=\{ \tau_0,\dots,\tau_{n-1} \}$$
i.e. in the following we choose to have $\tau_i$ represent one fixed element in $G$ (different choices may amount to different algebras, this remains to be investigated).

\begin{definition}
Let $k_i\in K^\times$, $i\in 0,\dots,{n-1}$, and put $c_{i,0}=c_{0,j}=1$ and
$$c_{i,j}=k_0^{-1}k_1^{-1} \cdots    k_{j-1}^{-1}   k_ik_{i+1}\cdots k_{i+j-1} \text{ for all nonzero } i,j\in \mathbb{Z}_n,$$
where we will read the indices modulo $n$.
Define a multiplication on the $K$–vector space
\[
A = \bigoplus_{i=0}^{n-1} K z_{i},
\]
 via the rules
$$(az_i)\cdot  (bz_j)=\tau_j(a)b (z_i\cdot  z_j),$$
$$z_i\cdot  z_0=z_0\cdot  z_i =z_i \text{ for all } i\in\mathbb{Z}_n,$$
$$z_i\cdot  z_j=c_{ji}z_{i+j} \text{ for all } i,j\in\mathbb{Z}_n\setminus \{0\}$$
for all $a,b\in K$, where again we read the indices modulo $n$.
The resulting  nonassociative unital algebra $A$ over $F$  is called a \emph{Menichetti algebra of degree $n$} and denoted by ${\rm Men}(K/F, k_0,\dots,k_{n-1})$.
\end{definition}

Note that $c_{i,j}=c_{j,i}$ by definition.

The algebra $A={\rm Men}(K/F, k_0,\dots,k_{n-1})$ is a nonassociative unital algebra over $F$ of dimension $n^2$.  It is easy to see that $K$ is a subfield of ${\rm Men}(K/F, k_0,\dots,k_{n-1})$.

\begin{lemma}
Define
\begin{equation}\label{equ:1}
 M(x_0,\dots,x_{n-1})=\left [\begin {array}{cccccc}
x_0 &  c_{{n-1},1}\tau_1(x_{n-1}) & ... & c_{1,{n-1}} \tau_{n-1}(x_{1})\\
x_1 & \tau_1(x_0) & ... &  c_{2,{n-1}} \tau_{n-1}(x_{2})\\
x_2 & c_{1,1}\tau_1(x_{1}) & \tau_2(x_0) & c_{3,{n-1}} \tau_{n-1}(x_{3})\\
...& ...  & ... & ...\\
x_{n-2} &  c_{n-3,1}\tau_1(x_{n-3})  & ...& c_{{n-1},{n-1}} \tau_{n-1}(x_{n-1})\\
x_{n-1} &  c_{n-2,1}\tau_1(x_{n-2})  & ...& \tau_{n-1}(x_{0})\\
\end {array}\right ]
 \end{equation}
and identify $x_0z_0+\dots +x_{n-1}z_{n-1}$ with $(x_0,\dots,x_{n-1})$. Then the multiplication of the Menichetti algebra
 ${\rm Men}(K/F, k_0,\dots,k_{n-1})$ can also be expressed as
 $$(x_0,\dots,x_{n-1})\cdot  (y_0,\dots,y_{n-1})=M(x_0,\dots,x_{n-1})(y_0,\dots,y_{n-1})^t.$$
\end{lemma}

 When $G=\langle \sigma \rangle$ is cyclic and $\tau_i=\sigma^i$ for all  $ i,j\in\mathbb{Z}_n$, then $M(x_0,\dots,x_{n-1})$ is the same matrix as given in \cite[(11)']{M}.

 \begin{remark}\label{remark}
  If $K/F$ is a cyclic Galois field extension of degree $n$ with Galois group $G=\langle \sigma \rangle$ and $\tau_i=\sigma^i$ for all  $ i\in\mathbb{Z}_n$ then
$${\rm Men}(K/F, 1,\dots,1,a)^{op}=(K/F,\sigma,a)$$
 is a Petit algebra, more precisely an associative or nonassociative cyclic algebra, depending on the choice of $a\in K^\times$  \cite{J96, St}.
For the basics on Petit algebras, the reader is referred to \cite{P66, BP18, BP19}.
\end{remark}

Menichetti algebras  are important examples of semiassociative algebras in the Brauer monoid $Br^{sa}(F)$.

\begin{proposition}\label{prop:nuc Galois}
 (i) The field $K$ is contained in the left, middle and right
 nucleus of \\
${\rm Men}(K/F,k_0,\dots, k_{n-1})$.
\\ (ii) ${\rm Men}(K/F,k_0,\dots, k_{n-1})$ is a semiassociative algebra and  $K$  is a maximal commutative subalgebra of ${\rm Men}(K/F,k_0,\dots, k_{n-1})$.
\\ (iii) Every proper nonassociative algebra ${\rm Men}(K/F,k_0,\dots, k_{n-1})$  is not homogeneous.
\end{proposition}

\begin{proof} $(i)$ The proof is straightforward.
\\ $(ii)$ The algebra ${\rm Men}(K/F,k_0,\dots, k_{n-1})$ is cyclic and faithful
over $K\otimes_F K$ via the action $(e\otimes e')a=eae'$ for all $a\in {\rm Men}(K/F,k_0,\dots, k_{n-1})$, $e,e'\in K$. Therefore,  ${\rm Men}(K/F,k_0,\dots, k_{n-1})$ is a semiassociative algebra.
 It follows from \cite[Corollary 7.3]{BHMRV} that $K$  is a maximal commutative subalgebra of ${\rm Men}(K/F,k_0,\dots, k_{n-1})$.
\\ $(iii)$  Since $K\subset {\rm Nuc}({\rm Men}(K/F,k_0,\dots, k_{n-1}))$ we know that ${\rm Men}(K/F,k_0,\dots, k_{n-1})$ is not semicentral.
Since ${\rm Men}(K/F,k_0,\dots, k_{n-1})$  is not semicentral, every proper nonassociative algebra ${\rm Men}(K/F,k_0,\dots, k_{n-1})$ is a nontrivial example of a semiassociative algebra: it does not lie in the similarity class of any $F$-central simple algebra $B$ in $Br^{sa}(F)$ and is thus not homogeneous \cite{BHMRV}.
 \end{proof}

  \begin{proposition}\label{prop:noncylic}
Suppose that $L/F$ is a Galois field extension of degree $n$ and that $A=L[t;\sigma]/L[t;\sigma](t^n-a)$ is a proper nonassociative Petit algebra.
 Let $K/F$ be any Galois field extension of degree $n$ that is not cyclic. Then neither $A$ nor $A^{op}$ are isomorphic to a Menichetti algebra  ${\rm Men}(K/F, k_0,\dots,k_{n-1})$ for any $k_i\in K^\times$.
\end{proposition}

\begin{proof}
The degree $n$ field extension $K/F$ is contained in the middle nucleus of ${\rm Men}(K/F,k_0,\dots, k_{n-1})$. The middle nucleus of  $A$ is the Galois field extension $L/F$ of degree $n$. Suppose $L[t;\sigma]/L[t;\sigma](t^n-a)^{op}\cong
  (K/F, k_0,\dots,k_{n-1})$. Then ${\rm Nuc}_m(A)={\rm Nuc}_m({\rm Men}(K/F,k_0,\dots, k_{n-1}))$, and $K\subset L$, a contradiction.
\end{proof}

\subsection{Classical associative crossed product algebras}

Let $K/F$ be a Galois field extension with Galois group $G$. An associative ($G$-)\emph{crossed product algebra} $A=(K,G,{\bf c})=\bigoplus_{g\in G}Kz_g$ over $F$ can be defined as follows. Define a map (also called a \emph{cocycle}) ${\bf c}:G\times G\to K^\times$, $(g,h)\mapsto c_{g,h}$ such that
$$c_{g,id}=c_{id,g}=1_K$$
 for all $g\in G$ (so that  $A$ has a unit element), and such that
$$c_{g,h}c_{g\circ h,u}=g(c_{h,u})c_{g,h\circ u}$$
for all $g,h,u\in G$.
(This is the cocycle condition which is needed for associativity of the algebra we define now).
 Then elements $\sum_{g\in G} a_gz_g\in A$ are added term-wise and multiplied according to the rule
$$(az_g)(b z_h)=a g(b) c_{g,h} z_{g\circ h}$$
for all $a, b\in K$,
which is extended bilinearily. There is also a $K$-bimodule structure on $A$ given by $a(b z_g)= (ab)z_g$ and $(a z_g) b=a g(b) z_g$.
The opposite algebra of an associative $G$-crossed product algebra is again a $G$-crossed product algebra, 
with multiplication now defined via
$$(az_g)(b z_h)=h(a) b c_{h,g} z_{g\circ h}$$
for all $a, b\in K$, $g,h\in G$.

\ignore{say $G$ is abelian, then
$z_gz_h= c_{g,h}z_{gh}= c_{g,h}z_{hg}$ and $z_h z_g=c_{h,g} z_{hg}$. If $c_{g,h}=c_{h,g}$ then we get
$$z_gz_h= c_{g,h}z_{gh}= c_{h,g}z_{hg}=z_h z_g.$$
}

\begin{definition}
Let $K/F $ be a Galois field extension with  Galois group $G$,
and let ${\bf c} : G \times G \to K^{\times}$, $(g,h)\mapsto c_{g,h}$ be a map such that $c_{id,h}=c_{g,id}=1$ for all $g,h\in G$.
Then the $K$–vector space
\[
A = \bigoplus_{g \in G} K z_{g},
\]
with the multiplication induced via
\[
(a z_g)(b z_h) = a g(b) {\bf c}(g,h) z_{g\circ h}= a g(b) c_{g,h}\, z_{g\circ h}\text{ for all }
 a,b \in K,\ g,h \in G.
\]
is a unital algebra denoted by  $A = (K, G,  {\bf c})$ and  called a  \emph{nonassociative ($G$-)crossed product algebra}.
\end{definition}

This algebra is associative if and only if
\[
c_{g,h}\,c_{g\circ h,u}
  = g(c_{h,u})\,c_{g,h\circ u}
  \quad \text{for all } g,h,u \in G.
\]
Its opposite algebra has  multiplication given by
$$(az_g)(b z_h)=h(a) b c_{h,g} z_{g\circ h}$$
for all $a, b\in K$, $g,h\in G$.

\begin{example}
When $G=\langle \sigma \rangle$ is cyclic and we choose $\tau_i=\sigma^i$,  then the multiplication in ${\rm Men}(K/F, k_0,\dots,k_{n-1})$ coincides with the one of the opposite algebra of the nonassociative  $G$-crossed product algebra $(K,G, {\bf c})$
choosing  ${\bf c}$  as ${\bf c}(\sigma^i, \sigma^j)=c_{i,j}$ for all nonzero $ i,j\in\mathbb{Z}_n$.
Then $z_i\cdot  z_j=c_{ji}z_{i+j}$ can be read as 
$$z_{\sigma^i}\cdot  z_{\sigma^j}=c_{ji}z_{\sigma^{i+j}}=c_{ji}z_{\sigma^{i}\circ \sigma^{j}}.$$
\end{example}

 \begin{remark}
Nonassociative $G$-crossed product algebras are special cases of Albert's nonassociative crossed extensions \cite{A1}.  In Albert's terminology,
what we do here is  ``extend'' a Galois field extension $K/F$ with Galois group $G$ and  construct special cases of  his crossed extension of the kind $(K,G,{\bf g})$, where ${\bf g}$ is what Albert calls an extension set. To construct a crossed extension $(K,G,{\bf g})$, Albert only requires $G$ to be a finite subset of $Gl_F(A)$ where every $F$-linear bijective map $\sigma:K\to K$ in $G$  satisfies $\sigma(1)=1$,
and ${\bf g}=\{g_{i,j}\}$ to be any set of elements in $K^\times$ for  $i,j\in \mathbb{Z}_n$. For a field extension $K/F$ of dimension $n$, Albert also defines a direct generalization of the Petit algebra $K[t;\sigma]/K[t;\sigma](t^n-d)$, these algebras were re-introduced much later in \cite{P66}.
 \end{remark}

 Let us make the following trivial but important observations about our definition of nonassociative crossed product algebras. Over fields, we made our definition general enough to include all important cases for code constructions over finite fields, yet not as general as possible (e.g. using a nonassociative generalisation of the very general associative setup  in \cite{OeLun}).

 Secondly, we defined nonassociative crossed product algebras $(K/F,G, {\bf c})$ using Galois field extensions $K/F$ of degree $n$ and their Galois groups $G={\rm Gal}(K/F)$ and refrained from defining more general looking $H$-crossed product algebras $(K/F,H, {\bf c})$ where we instead choose a subgroup $H$ of $G$ of order $m$ in the definition (abusing notation here, as the definition would be analogously as the one given above). This is because a straightforward check reveals that  any $H$-crossed product algebra $(K/F,H, {\bf c})$ is not just an algebra over $F$, but also an algebra over the fixed field $E={\rm Fix}(H)$ of $H$. Moreover, by Galois Theory, and viewed as $E$-algebra, the $H$-crossed product algebra $(K/F,H, {\bf c})$ is canonically isomorphic to the $E$-algebra $(K/E,H, {\bf c})$. In other words, for any subgroup $H$ of $G$, a
 $H$-crossed product algebra $(K/F,H, {\bf c})$ which has  dimension $[K:F]\cdot |H|=mn$ over $F$ is isomorphic to the algebra $(K/E,{\rm Gal}(K/E), {\bf c})=(K/E,H, {\bf c})$, viewed as an algebra over $F$.

\subsection{Generalized Menichetti algebras}
 Let $D$ be a central simple algebra of degree $m$  over its center $C$. Let $\tau_1,\dots,\tau_{n-1}\in {\rm Aut}(D)$ such that $\{\tau_0, \tau_1,\dots,\tau_{n-1}\}$ is an abelian subgroup of ${\rm Aut}(D)$ with $\tau_0=id$, and such that
${\tau_i}|_{C}$ has finite order $n_i$.

Put $F_i={\rm Fix}(\tau_i)\cap C$. Then
  $C/F_i$ is a cyclic Galois field extension of degree $n_i$ with $\mathrm{Gal}(C/F_i) = \langle {\tau_i}|_{C} \rangle$. Define $F=F_1\cap\dots\cap F_n$. Suppose that  
   $C/F$ is an abelian Galois field extension with Galois group $G=\{{\tau_1}|_{C},\dots ,{\tau_{n-1}}|_{C}\}$.

\begin{definition}
Let $k_i\in C^\times$, $i\in 0,\dots,{n-1}$, and put
$$c_{i,j}=k_0^{-1}k_1^{-1} \cdots    k_{j-1}^{-1}   k_ik_{i+1}\cdots k_{i+j-1}$$
for all nonzero $i,j\in \mathbb{Z}_n$ and $c_{i,0}=c_{0,j}=1$. For all $x_i\in D$, define
\[ M(x_0,\dots,x_{n-1})=\left [\begin {array}{cccccc}
x_0 &  c_{{n-1},1}\tau_1(x_{n-1}) & ... & c_{1,{n-1}} \tau_{n-1}(x_{1})\\
x_1 & \tau_1(x_0) & ... &  c_{2,{n-1}} \tau_{n-1}(x_{2})\\
x_2 & c_{1,1}\tau_1(x_{1}) & \tau_2(x_0) & c_{3,{n-1}} \tau_{n-1}(x_{3})\\
...& ...  & ... & ...\\
x_{n-2} &  c_{n-3,1}\tau_1(x_{n-3})  & ...& c_{{n-1},{n-1}} \tau_{n-1}(x_{n-1})\\
x_{n-1} &  c_{n-2,1}\tau_1(x_{n-2})  & ...& \tau_{n-1}(x_{0})\\
\end {array}\right ].\]
 Define a multiplication on the left $D$-module $A=D^n$ via
 $$(x_0,\dots,x_{n-1})\cdot  (y_0,\dots,y_{n-1})=M(x_0,\dots,x_{n-1})(y_0,\dots,y_{n-1})^t.$$
 We obtain a \emph{generalized Menichetti algebra of degree $mn$} we denote by ${\rm Men}(D, G, k_0,\dots,k_{n-1})$.
\end{definition}

The algebra ${\rm Men}(D, G, k_0,\dots,k_{n-1})$ is unital, has dimension $n^2m^2$ over $F$ and  $D$ as a subalgebra.

\begin{example}
 Let $D_0$ be a central simple algebra over $F$ of degree $m$.
Let  $K/F$ be a cyclic field extension of degree $n$  with ${\rm Gal}(K/F)=\langle \sigma\rangle$.
 We look at the special case that $D=D_0\otimes_F K$. Let $\widetilde{\sigma}$ be the unique extension of $\sigma$ to $D$ such that $\widetilde{\sigma}|_{D_0}=id_{D_0}$.
   Then the tensor product
$$D_0\otimes_F {\rm Men}(K/F, k_0,\dots,k_{n-1})\cong {\rm Men}(D, \langle\widetilde{\sigma} \rangle, k_0,\dots,k_{n-1})$$
is a nonassociative generalised Menichetti algebra 
over $F$ of degree $mn$.
   In the semiassociative Brauer monoid $Br(F)^{sa}$, we thus obtain
$$[D_0]^{sa} [{\rm Men}(K/F, k_0,\dots,k_{n-1})]^{sa} =[ {\rm Men}(D,\langle \widetilde{\sigma}\rangle, k_0,\dots,k_{n-1})]^{sa}.$$
\end{example}

Let now $K/F$ be a Galois field extension of degree $n$  with $G={\rm Gal}(K/F)=\{ \tau_1,\dots,\tau_m\}$. Let $\widetilde{\tau_i}$ be the unique extension of $\tau_i$ to $D$ such that $\widetilde{\tau_i}|_{D_0}=id_{D_0}$.  Then $\widetilde{G}=\{\widetilde{\tau}_1,\dots,\widetilde{\tau}_{n-1}\}$ is a subgroup of ${\rm Aut}(D)$. The tensor product of two semiassociative algebras is semiassociative, thus
$$A=D_0\otimes_F {\rm Men}(K/F,G,  k_0,\dots,k_{n-1})$$
is the semiassociative algebra ${\rm Men}(D, \widetilde{G}, k_0,\dots,k_{n-1})$ over $F$ of degree $mn$

Since ${\rm Nuc}_m(A)=D_0\otimes {\rm Nuc}_m({\rm Men}(K/F, k_0,\dots,k_{n-1}))$, ${\rm Nuc}_r(A)=D_0\otimes {\rm Nuc}_r({\rm Men}(K/F, k_0,\dots,k_{n-1}))$ and ${\rm Nuc}_l(A)=D_0\otimes {\rm Nuc}_l({\rm Men}(K/F, k_0,\dots,k_{n-1}))$, we have
$D\subset {\rm Nuc}_m(A)$, $D\subset {\rm Nuc}_r(A)$ and $D\subset {\rm Nuc}_l(A)$.
 Thus $A$ is not semicentral and hence it does not lie in the similarity class of any $F$-central simple algebra $B$ in $Br^{sa}(F)$ as then $A$ is not homogeneous \cite{BHMRV}.

\begin{example}
In particular, let $\sigma\in {\rm Aut}(D)$ such that
$\sigma|_{C}$ has finite order $n$, and put $F={\rm Fix}(\sigma)\cap C$. Then
  $C/F$ is a cyclic Galois field extension of degree $n$ with $G=\mathrm{Gal}(C/F) = \langle \sigma|_{C} \rangle$.
For  all $x_i\in D$, then
\[ M(x_0,\dots,x_{n-1})=\left [\begin {array}{cccccc}
x_0 &  c_{{n-1},1}\sigma(x_{n-1}) & ... & c_{1,{n-1}} \sigma^{n-1}(x_{1})\\
x_1 & \sigma(x_0) & ... &  c_{2,{n-1}} \sigma^{n-1}(x_{2})\\
x_2 & c_{1,1}\sigma(x_{1}) & \sigma^2(x_0) & c_{3,{n-1}} \sigma^{n-1}(x_{3})\\
...& ...  & ... & ...\\
x_{n-2} &  c_{n-3,1}\sigma(x_{n-3})  & ...& c_{{n-1},{n-1}} \sigma^{n-1}(x_{n-1})\\
x_{n-1} &  c_{n-2,1}\sigma(x_{n-2})  & ...& \sigma^{n-1}(x_{0})\\
\end {array}\right ]
 \]
  defines the left multiplication of ${\rm Men}(D, G, k_0,\dots,k_{n-1})$.
In this case, we also denote the algebra also by ${\rm Men}(D, \sigma, k_0,\dots,k_{n-1})$.
\end{example}

\begin{lemma}
Let $E$ be a maximal \'etale subalgebra of $D$ of dimension $m$.  Then
  $E$ lies in the  left, middle and right nucleus of ${\rm Men}(D, \sigma, k_0,\dots,k_{n-1})$. Therefore ${\rm Men}(D, G, \sigma, k_0,\dots,k_{n-1})$ is a semiassociative algebra that is not homogeneous.
\end{lemma}
\begin{proof}
  We know that
  ${\rm Men}(D, \sigma, k_0,\dots,k_{n-1})$ has the \'etale algebra $E/F$ of dimension $mn$ in its nucleus. The algebra  ${\rm Men}(D, \sigma, k_0,\dots,k_{n-1})$ is a faithful $E^e$-module, thus is cyclic
  as a $E^e$-module \cite[Remark 3.3]{BHMRV} and is a semiassociative algebra \cite{BHMRV}.
  This implies that  ${\rm Men}(D, \sigma, k_0,\dots,k_{n-1})$ is not semicentral.
Since ${\rm Men}(D, \sigma, k_0,\dots,k_{n-1})$  is not semicentral, it does not lie in the similarity class of any $F$-central simple algebra $B$ in $Br^{sa}(F)$ and is thus not homogeneous \cite{BHMRV}.
\end{proof}

%
\section{When a Menichetti algebra is a division algebra}\label{sec:results}
%

 Let  $K/F$ be an abelian Galois field extension  of degree $n$ and as before, write ${\rm Gal}(K/F)=\{ \tau_0,\dots,\tau_{n-1} \}$ with $\tau_0=id$.
Let $k_i\in K^\times$, $i\in 0,\dots,{n-1}$, and
$$c_{i,j}=k_0^{-1}k_1^{-1} \cdots    k_{j-1}^{-1}   k_ik_{i+1}\cdots k_{i+j-1} \text{ for all nonzero } i,j\in \mathbb{Z}_n.$$

 The analogous arguments as the one used in \cite[Proposition 1]{M} show that
 $$\det (M(x_0,\dots,x_{n-1}))=\det (M(\tau_i(x_0),\dots,\tau_i(x_{n-1})))$$
  for all $x_i\in K$, $i\in\mathbb{Z}_n$. More precisely,
$\det (M(x_0, \dots,x_{n-1}))$ is  a sum of terms of the type
$$k_0^{n_0}k_1^{n_1} \cdots k_{n-1}^{n_{n-1}} \rho_0(x_0)\rho_1(x_1)\cdots \rho_{n-1}(x_{n-1})$$
with $n_i\in\mathbb{Z}$ and the $\rho_i$ are either 0 or $\rho_i(x_i)=\tau_{r_1} (x_i)\cdots \tau_{r_s}(x_i) $, where the $r_j\in\mathbb{Z}_n$ are all distinct. Therefore
$\det (M(x_0, \dots,x_{n-1}))$ is  a sum of elements that look like
$$k_0^{n_0} k_1^{n_1} \cdots k_{n-1}^{n_{n-1}} f_{n_0, n_1,\dots, n_{n-1}}(x_0,\dots,x_{n-1}),$$
where the $f_{n_0, n_1,\dots, n_{n-1}}(x_0,\dots,x_{n-1})$ are polynomial functions in $x_0,\dots,x_{n-1}$, $n_i\in \mathbb{Z}$, and
$f_{n_0, n_1,\dots, n_{n-1}}(x_0,\dots,x_{n-1})\in F$ for all choices of $x_i\in K$.
This is proved by using clever matrix manipulations completely analogously as in
 \cite[Corollary 1]{M}. For the explicit computation when $n=3$, see Section \ref{sec:three}, when $n=4$, see Section \ref{sec:4}. It immediately yields the following general result.

 \begin{theorem}
If all the elements $k_0^{n_0} k_1^{n_1} \cdots k_{n-1}^{n_{n-1}} $,  $n_i\in\mathbb{Z}$, that appear in the determinant  in the sum of its terms are linearly independent over $F$, then ${\rm Men}(K/F, k_0,\dots,k_{n-1})$ is a division algebra over $F$.
 \end{theorem}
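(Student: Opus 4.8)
The plan is to show that under the stated linear independence hypothesis, $\det\bigl(M(x_0,\dots,x_{m-1})\bigr)=0$ forces $x_0=\dots=x_{m-1}=0$; since $M(x_0,\dots,x_{m-1})$ is the matrix of right multiplication $R_{(x_0,\dots,x_{m-1})}$ (viewed as a $K$-linear map on $K^m$, using $K\subset{\rm Nuc}(A)$ from Proposition~\ref{prop:nuc Galois}), this gives that $R_a$ is invertible for every nonzero $a\in A$. To conclude that $A$ is a division algebra one also needs $L_a$ invertible; I would observe that the opposite algebra $(K/F,k_0,\dots,k_{m-1})^{op}$ is again (up to the obvious relabelling used already in the proof of Proposition~\ref{prop:nuc Galois}) a Menichetti algebra with a determinant of the same shape, so the same argument applied to $A^{op}$ handles left multiplication; alternatively, for finite-dimensional $A$ it suffices to rule out zero divisors, and $R_a$ invertible for all $a\neq 0$ already does that.

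The core of the argument is the following. By the displayed discussion preceding the theorem, $\det\bigl(M(x_0,\dots,x_{m-1})\bigr)=\sum_{(n_0,\dots,n_{m-1})} k_0^{n_0}\cdots k_{m-1}^{n_{m-1}}\,f_{n_0,\dots,n_{m-1}}(x_0,\dots,x_{m-1})$, where each $f_{n_0,\dots,n_{m-1}}$ takes values in $F$ on $K$-inputs. If the monomials $k_0^{n_0}\cdots k_{m-1}^{n_{m-1}}$ occurring here are $F$-linearly independent, then $\det M = 0$ is equivalent to $f_{n_0,\dots,n_{m-1}}(x_0,\dots,x_{m-1})=0$ for every occurring multi-index. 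I would then exploit the invariance relation $\det\bigl(M(x_0,\dots,x_{m-1})\bigr)=\det\bigl(M(\tau_i(x_0),\dots,\tau_i(x_{m-1}))\bigr)$ for all $i$: this forces each $f_{n_0,\dots,n_{m-1}}$ to be $\mathrm{Gal}(K/F)$-invariant in the sense that $f_{n_0,\dots,n_{m-1}}(\tau_i(x_0),\dots)=f_{n_0,\dots,n_{m-1}}(x_0,\dots)$, consistent with the fact that these values land in $F$.

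The decisive step is to locate, among the $f_{n_0,\dots,n_{m-1}}$, a term that detects nonvanishing of the input. Here I would isolate the ``trace-form'' term coming from the product of diagonal entries $x_0\,\tau_1(x_0)\cdots\tau_{m-1}(x_0)$, i.e.\ the norm $N_{K/F}(x_0)$, together with its siblings obtained by permuting which $x_r$ sits on the diagonal; more precisely, following \cite[Corollary~1]{M}, the term with all $n_i=0$ should be (a sum over cyclic shifts of) $\prod_{r} N_{K/F}(x_r)$-type expressions, or at least contain $\sum_r N_{K/F}(x_r)$ after the matrix manipulations. If $\det M=0$ then this $F$-valued polynomial vanishes; since $N_{K/F}(x_r)=0$ iff $x_r=0$ in a field extension, and a finite sum of norms of elements of $K$ cannot vanish unless — and this is exactly the place where one must argue carefully, perhaps passing to an algebraic closure or using that $f$ vanishes identically as a polynomial, not just at one point — each summand does, one gets $x_0=\dots=x_{m-1}=0$. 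The main obstacle is precisely this last implication: extracting from ``$\det M(x)=0$'' the conclusion ``$x=0$'' requires pinning down which $f_{n_0,\dots,n_{m-1}}$ plays the role of a faithful ``reduced norm,'' and verifying it is anisotropic; Menichetti's matrix manipulations in \cite[Prop.~1, Cor.~1]{M} are the right tool, and I would mirror them, checking that the abelian (rather than merely cyclic) hypothesis does not disturb the combinatorics of which Galois automorphisms appear in each $\rho_i$. I expect everything else to be bookkeeping: expanding the determinant, tracking the exponents $n_i$, and invoking the hypothesis to split the vanishing equation into its $F$-independent components.
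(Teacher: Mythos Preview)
Your reduction to ``$\det M(x)=0\Rightarrow x=0$'' and your use of the decomposition
\[
\det M(x)=\sum k_0^{n_0}\cdots k_{m-1}^{n_{m-1}}\,f_{n_0,\dots,n_{m-1}}(x),\qquad f_{n_0,\dots,n_{m-1}}(x)\in F,
\]
together with the linear-independence hypothesis to conclude that every $f_{n_0,\dots,n_{m-1}}(x)$ vanishes, is exactly the paper's reasoning. The paper records no further proof of this theorem: it is stated as following ``immediately'' from that decomposition, the $F$-valuedness of the $f$'s being what is attributed to Menichetti's matrix manipulations.

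Where you and the paper part ways is at the step you rightly isolate as the crux: why does the simultaneous vanishing of all $f_{n_0,\dots,n_{m-1}}$ force $x=0$? The paper does not justify this at the stated level of generality; it immediately remarks that the criterion is ``not tractable'' and passes to the special family $k_0=1$, $k_1=\dots=k_{m-2}=k$, $k_{m-1}=kk'$. Your proposed mechanism, however, is off target. There is no single coefficient of the shape $\sum_r N_{K/F}(x_r)$ whose anisotropy would finish the job: already for $m=3$ the constant coefficient is $N_{K/F}(x_0)$ alone, while the coefficient of $ca^{-1}$ is $N_{K/F}(x_1)-T_{K/F}(x_0\sigma(x_1)\sigma^2(x_2))$, which is isotropic by itself. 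What actually works in the cases the paper proves (Theorem~\ref{thm:main1}, Proposition~\ref{prop:div}, Lemma~\ref{4x4Ex1}) is a \emph{cascade}: one coefficient equals $N_{K/F}(x_0)$, giving $x_0=0$; after substituting $x_0=0$ another coefficient collapses to a nonzero multiple of $N_{K/F}(x_1)$, giving $x_1=0$; and so on, using the identity $\det M(0,\dots,0,x_i,\dots,x_{m-1})=kk'^i\bigl(f_1(0,\dots,0,x_i,\dots)+k'(\dots)\bigr)$ displayed just before Theorem~\ref{thm:main1}. That recursive structure, not a single anisotropic form, is the missing idea; the paper exhibits it only for the special family, not for arbitrary $k_0,\dots,k_{m-1}$.
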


Since this is a very general criterium and not easily tractable,  we will focus on some special cases.
Consider ${\rm Men}(K/F,1,k,\dots,k,kk')$, i.e. pick $k,k'\in K^\times$ and set
$$k_0=1, k_1=k_2=\dots =k_{n-2}=k \text{ and } k_{n-1}=kk'.$$
 Then
 $$c_{i,j}=k \text{ for } i+j<n, c_{i,j}=kk'  \text{ for } i+j=n,  \text{ and } c_{i,j}=k' \text{ for } i+j>n,$$
 and
 \[ M(x_0,\dots,x_{n-1})=\left [\begin {array}{ccccccc}
x_0 &  kk'\tau_0(x_{n-1}) & kk' \tau_2(x_{n-2}) &... & kk' \tau_{n-1}(x_{1})\\
x_1 & \tau_0(x_0) & k' \tau_2(x_{n-1})&... &  k' \tau_{n-1}(x_{2})\\
x_2 & k\tau_1(x_{1}) & \tau_2(x_0) & ... & k' \tau_{n-1}(x_{3})\\
...& ...  & ... & ...\\
x_{n-2} &  k\tau_0(x_{n-3})  & ...& ...& k' \tau_{n-1}(x_{n-1})\\
x_{n-1} &  k\tau_0(x_{n-2}) &   k \tau_2(x_{n-3})  & ...& \tau_{n-1}(x_{0})\\
\end {array}\right ].
 \]

If $k=\alpha_1+\alpha_2k'$ with $\alpha_i\in F$, then $\det(M(x_0,\dots,x_{n-1}))$ is a polynomial of the form
$$\det M(x_0,\dots,x_{n-1})=N_{K/F}(x_0)+kk'f_1(x_0,\dots,x_{n-1})+\dots +kk'^{n-1}f_{n-1}(x_0,\dots,x_{n-1})$$
and $f_j(x_0,\dots,x_{n-1})\in F$ for all choices of $x_i\in K$. In particular,
$$f_0(x_0,\dots,x_{n-1})=x_0\tau_1(x_0)\cdots\tau_{n-1}(x_0)=N_{K/F}(x_0),$$
$$f_1(0,\dots,0,x_i,x_{i+1},\dots,x_{n-1})=(-1)^{(i+2)(m-i)}\alpha_1^{m-i-1}x_i \tau_0(x_i)\cdots \tau_{n-1}(x_i),$$
and consequently we find that
$$\det M(0,\dots,0,x_i,x_{i+1},\dots,x_{n-1})=kk'^i(f_1(0,\dots,0,x_i,x_{i+1},\dots,x_{n-1})+k'(\dots))$$
analogously as in \cite{M}.

\begin{theorem}\label{thm:main1}
If $k,k'\in K^\times$, $k=\alpha_1+\alpha_2k'$ with $\alpha_i\in F$, $\alpha_1\not=0$, are chosen such that $1,kk',kk'^2,\dots,kk'^{n-1}$ are linearly independent over $F$, then
${\rm Men}(K/F,1,k,\dots,k,kk')$ is a division algebra over $F$.
 \end{theorem}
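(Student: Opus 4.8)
The plan is to show that $(K/F,1,k,\dots,k,kk')$ has no zero divisors, which by the finite-dimensionality already noted is equivalent to being a division algebra. Concretely, I must show that whenever $x=(x_0,\dots,x_{m-1})\neq 0$ in $K^m$, the matrix $M(x_0,\dots,x_{m-1})$ is invertible, i.e. $\det M(x_0,\dots,x_{m-1})\neq 0$; by the symmetry $\det M(x_0,\dots,x_{m-1})=\det M(\tau_i(x_0),\dots,\tau_i(x_{m-1}))$ this handles both left and right multiplication at once. The key structural input is the expansion recorded just before the statement: under the hypothesis $k=\alpha_1+\alpha_2 k'$,
$$\det M(x_0,\dots,x_{m-1})=N_{K/F}(x_0)+kk'f_1(x)+kk'^2 f_2(x)+\dots+kk'^{m-1}f_{m-1}(x),$$
with each $f_j(x)\in F$ for $x_i\in K$, together with the explicit evaluations of $f_0=N_{K/F}(x_0)$ and of $f_1$ on tails $(0,\dots,0,x_i,\dots,x_{m-1})$.

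First I would assume for contradiction that $\det M(x)=0$ for some nonzero $x$. Since $1,kk',kk'^2,\dots,kk'^{m-1}$ are linearly independent over $F$ and all the coefficients $N_{K/F}(x_0),f_1(x),\dots,f_{m-1}(x)$ lie in $F$, vanishing of the determinant forces
$$N_{K/F}(x_0)=0\quad\text{and}\quad f_1(x)=f_2(x)=\dots=f_{m-1}(x)=0.$$
From $N_{K/F}(x_0)=0$ and $K$ being a field we get $x_0=0$. Now let $i\geq 1$ be the smallest index with $x_i\neq 0$ (such $i$ exists since $x\neq 0$), so $x=(0,\dots,0,x_i,x_{i+1},\dots,x_{m-1})$. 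Here I invoke the displayed formula $\det M(0,\dots,0,x_i,\dots,x_{m-1})=kk'^i\bigl(f_1(0,\dots,0,x_i,\dots,x_{m-1})+k'(\cdots)\bigr)$ and the explicit value
$$f_1(0,\dots,0,x_i,\dots,x_{m-1})=(-1)^{(i+2)(m-i)}\alpha_1^{m-i-1}\,x_i\tau_0(x_i)\cdots\tau_{m-1}(x_i)=\pm\alpha_1^{m-i-1}N_{K/F}(x_i).$$
Because $\alpha_1\neq 0$ and $x_i\neq 0$, this quantity is nonzero. But $\det M(x)=0$ with the factored form $kk'^i(\cdots)$ and the linear independence of the relevant monomials $kk'^i,kk'^{i+1},\dots$ over $F$ forces $f_1(0,\dots,0,x_i,\dots,x_{m-1})=0$ (it is the lowest-degree coefficient in the expansion of the tail determinant), a contradiction. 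Hence no nonzero zero divisor exists.

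The main obstacle I expect is making the "lowest-degree term" argument fully rigorous: I need to be sure that when $x$ has the form $(0,\dots,0,x_i,\dots,x_{m-1})$, the determinant genuinely factors as $kk'^i$ times an element of $F[k']$ whose constant term (in $k'$) equals $f_1$ evaluated there, and that the set of monomials $\{kk'^i, kk'^{i+1},\dots\}$ occurring is a subset of the $F$-linearly independent set $\{1,kk',\dots,kk'^{m-1}\}$ — in particular that no powers $k'^j$ with $j\geq m$ reappear in disguise after using $k=\alpha_1+\alpha_2k'$ to collapse higher powers. This is exactly the bookkeeping that Menichetti carries out on p.\ 344 of \cite{M}, and I would mirror it: track the entries of $M$ in the tail case, observe that every term in the Leibniz expansion that survives carries at least one factor $k$ and $i$ factors $k'$ (coming from the block structure with the $kk'$, $k'$, $k$ pattern of the $c_{i,j}$), extract $kk'^i$, and identify the remaining constant term with the stated $f_1$ value via the same cofactor/permutation analysis. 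Once this lemma-level claim is in hand, the contradiction above closes the proof. I would present the tail computation as a short verification referencing \cite[p.\ 344]{M}, since the excerpt has already asserted it holds "analogously."
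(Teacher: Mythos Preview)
Your proposal is correct and follows essentially the same route as the paper's proof: use the $F$-linear independence of $1,kk',\dots,kk'^{m-1}$ to force all the $F$-valued coefficients $f_0=N_{K/F}(x_0),f_1,\dots,f_{m-1}$ to vanish, then use the tail expansion $\det M(0,\dots,0,x_i,\dots,x_{m-1})=kk'^i(f_1(\text{tail})+k'(\cdots))$ together with the explicit value $f_1(\text{tail})=\pm\alpha_1^{m-i-1}N_{K/F}(x_i)$ to conclude $x_i=0$ for all $i$. Your write-up is in fact more detailed than the paper's (which simply asserts the equivalence $f_0=\dots=f_{m-1}=0 \Leftrightarrow x_0=\dots=x_{m-1}=0$ by invoking the tail formula), and you have correctly isolated the one point that needs care, namely that after specialising to a tail the determinant really is a linear combination of $kk'^i,\dots,kk'^{m-1}$ with $F$-coefficients; this is the bookkeeping deferred to \cite[p.~344]{M} in both the paper and your sketch.
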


 This generalises \cite[p. 344]{M}.

 \begin{proof}
 If $1,kk',kk'^2,\dots,kk'^{n-1}$ are linearly independent over $F$, then
 $$\det M(x_0,\dots,x_{n-1})=N_{K/F}(x_0)+kk'f_1(x_0,\dots,x_{n-1})+\dots +kk'^{n-1}f_{n-1}(x_0,\dots,x_{n-1})=0$$
  if and only if
 $f_0=\dots =f_{n-1}=0$, which in turn is equivalent to $x_0=\dots=x_{n-1}=0$, employing that
 $$\det M(0,\dots,0,x_i,x_{i+1},\dots,x_{n-1})=kk'^i(f_1(0,\dots,0,x_i,x_{i+1},\dots,x_{n-1})+k'(\dots)).$$
 \end{proof}

 \begin{corollary}  \label{cor:M1}
 (i) Choose $k',k=\alpha_1+\alpha_2k'\in K^\times$, $\alpha_i\in F$, $\alpha_1\not=0$, such that $1,k',k'^2,\dots,k'^{n-1}$ are linearly independent over $F$. 
  Then
 $$k'^{m}=\sum_{i+0}^{n-1}\lambda_i k'^i$$
  for suitable $\lambda_i\in F$. If
 $$\alpha_1^{n-1}+\sum_{i=1}^{n-1} (-1)^{i+1}\lambda_{m-i}\alpha_1^{m-i-1} \alpha_2^{i}\not=0,$$
 then ${\rm Men}(K/F,1,k,\dots,k,kk')$ is a division algebra  over $F$.
 \\ (ii) Let $k,k'\in K^\times$.
 Choose
 $$k_0=1, k_1=\dots=k_{n-2}=k, k_{n-1}=kk', k'=\alpha_1+\alpha_2k\in K^\times, \alpha_i\in F, \alpha_1\not=0,$$
  such that $1,k'k,k'k^2,\dots,k'k^{n-1}$ are linearly independent over $F$. Then ${\rm Men}(K/F,1,k,\dots,k,kk')$ is a division algebra  over $F$.
 \end{corollary}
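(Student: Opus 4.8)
The plan is to derive both parts from Theorem~\ref{thm:main1} (for (i)) and from the mirror image of the determinant identity preceding it, with the roles of $k$ and $k'$ interchanged (for (ii)); in each case the only substantive work is to recognise \emph{when} the relevant $m$ elements of $K$ are $F$-linearly independent, after which the division property is immediate.

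For (i): the hypothesis that $1,k',\dots,k'^{m-1}$ are $F$-linearly independent says exactly that $k'$ is a primitive element, so $1,k',\dots,k'^{m-1}$ is an $F$-basis of $K$ and $k'^{m}=\sum_{i=0}^{m-1}\lambda_i k'^i$. Since $k=\alpha_1+\alpha_2 k'$ we have $kk'^{j}=\alpha_1 k'^{j}+\alpha_2 k'^{j+1}$ for $0\le j\le m-2$ and $kk'^{m-1}=\alpha_1 k'^{m-1}+\alpha_2\sum_{i=0}^{m-1}\lambda_i k'^i$. Hence the matrix $T$ expressing $1,kk',\dots,kk'^{m-1}$ in the basis $1,k',\dots,k'^{m-1}$ coincides with the lower bidiagonal matrix having diagonal $(1,\alpha_1,\dots,\alpha_1)$ and subdiagonal $(0,\alpha_2,\dots,\alpha_2)$ in its first $m-1$ columns, while its last column is $(\alpha_2\lambda_0,\dots,\alpha_2\lambda_{m-2},\alpha_1+\alpha_2\lambda_{m-1})^{t}$. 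Expanding $\det T$ along the first column and then the resulting $(m-1)\times(m-1)$ determinant along its last column, the minor indexed by a row $i$ ($1\le i\le m-1$) decomposes as a block-diagonal product of a lower-bidiagonal block of size $i-1$ with diagonal $\alpha_1$ and an upper-bidiagonal block of size $m-1-i$ with diagonal $\alpha_2$, hence equals $\alpha_1^{\,i-1}\alpha_2^{\,m-1-i}$; collecting the cofactors with their signs $(-1)^{i+m-1}$, cancelling the $\alpha_2$ from the $\lambda_i$-entries, and reindexing by $i\mapsto m-i$ gives
\[
\det T=\alpha_1^{m-1}+\sum_{i=1}^{m-1}(-1)^{i+1}\lambda_{m-i}\,\alpha_1^{\,m-i-1}\alpha_2^{\,i}.
\]
Thus the displayed inequality is precisely the statement that $T$ is invertible, i.e.\ that $1,kk',\dots,kk'^{m-1}$ are $F$-linearly independent; together with $\alpha_1\neq 0$ this is the hypothesis of Theorem~\ref{thm:main1}, which gives the conclusion.

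For (ii): run the analysis preceding Theorem~\ref{thm:main1} with $k$ and $k'$ interchanged. Because $k'=\alpha_1+\alpha_2 k$ with $\alpha_1\in F^{\times}$, every structure constant $c_{i,j}\in\{k,kk',k'\}$ of $M(x_0,\dots,x_{m-1})$ reduces modulo this relation, and the same telescoping used by Menichetti shows that
\[
\det M(x_0,\dots,x_{m-1})=N_{K/F}(x_0)+\sum_{j=1}^{m-1}(k'k^{\,j})\,g_j(x_0,\dots,x_{m-1}),
\]
with $g_j(x_0,\dots,x_{m-1})\in F$ for all $x_i\in K$ and with $g_1(0,\dots,0,x_i,x_{i+1},\dots,x_{m-1})$ a nonzero $F$-multiple of $\alpha_1^{\,m-i-1}\prod_{r}\tau_r(x_i)$, exactly as in \cite[p.~344]{M}. (Note that the independence of $1,k'k,\dots,k'k^{m-1}$ forces $k$ to be a primitive element, which legitimises this reduction.) If $1,k'k,\dots,k'k^{m-1}$ are $F$-linearly independent, then $\det M=0$ forces $N_{K/F}(x_0)=0$ and $g_1=\dots=g_{m-1}=0$, and the descending argument in the proof of Theorem~\ref{thm:main1} (using $\alpha_1\neq 0$) yields $x_0=\dots=x_{m-1}=0$; hence $(K/F,1,k,\dots,k,kk')$ has no zero divisors, so it is a division algebra.

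The routine part is the sign bookkeeping in the cofactor expansion of (i). The genuine obstacle is the identity used in (ii): that $\det M$ really has the $k'k^{\,j}$-graded form with $F$-valued coefficients and leading term $N_{K/F}(x_0)$. Since the algebra $(K/F,1,k,\dots,k,kk')$ is \emph{not} literally symmetric in $k$ and $k'$ — the parameter $k$ occurs in the entries of $M$ with $i+j<m$ while $k'$ occurs only in those with $i+j\ge m$, so the transversals contributing to the "$k$-free" part differ from those contributing to the "$k'$-free" part — establishing this form is an independent computation, running Menichetti's matrix manipulations with the two parameters swapped, rather than a formal relabelling of the case $k=\alpha_1+\alpha_2 k'$.
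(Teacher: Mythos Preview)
Your proposal is correct and follows essentially the paper's approach: for (i) you write the same change-of-basis matrix from $\{1,k',\dots,k'^{m-1}\}$ to $\{1,kk',\dots,kk'^{m-1}\}$ and reduce to Theorem~\ref{thm:main1} (indeed you carry out the cofactor expansion that the paper leaves implicit), and for (ii) you invoke the same mirror expansion $\det M = N_{K/F}(x_0) + k'\sum_{j=1}^{m-1} k^j f_j$ with $F$-valued $f_j$ followed by the descent argument. The only minor discrepancy is that the paper's descent in (ii) kills the variables from the top down via $f_j(0,x_1,\dots,x_{m-j},0,\dots,0)=\pm\alpha_1^{m-j-1}N_{K/F}(x_{m-j})$ rather than from the bottom up as in your sketch, but the structure of the argument is identical.
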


This generalises \cite[p. 345]{M}.

 \begin{proof}
 $(i)$ The elements $1$, $kk'=\alpha_1k'+\alpha_2k'^2, \dots,kk'^{n-2}=\alpha_1k'^{n-2}+\alpha_2k'^{n-1}$ and
 $kk'^{n-1}=\alpha_2\lambda_0+   \alpha_2\lambda_1k'+\dots+ (\alpha_1+\alpha_2 \lambda_{n-1})k'^{n-1}$  are linearly independent over $F$ if and only if
 \[\det(\left [\begin {array}{ccccccc}
1 & 0 & 0 &... & 0\\
0 & \alpha_1 & \alpha_1&... &  0\\
0& 0 & \alpha_1 & ... & 0\\
...& ...  & ... & ...\\
0 &  0  & ...& \alpha_1 & \alpha_2\\
\alpha_2 \lambda_0 &  \alpha_2 \lambda_1 & ...  & \alpha_2 \lambda_{n-2}& \alpha_1 +\alpha_2 \lambda_{n-1}\\
\end {array}\right ])\not=0
 \]
 for $(x_0,\dots,x_{n-1})\not=(0,\dots,0)$.
 \\ $(ii)$ The proof is analogous to the one for $(i)$:
 $$\det M(x_0,\dots,x_{n-1})=f_0(x_0,\dots,x_{n-1})+k' \sum_{j=1}^{n-1} k^jf_j(x_0,\dots,x_{n-1})$$
 and
 $f_0(x_0,\dots,x_{n-1})=N_{K/F}(x_0),$
 $$f_j(0,x_1,\dots,x_{m-j},0,\dots,0)=(-1)^{(j+2)(n-j)}\alpha_1^{n-j-1}N_{K/F}(x_{n-j}).$$
 \end{proof}

Menichetti algebras of degree 2 are well-known and are always crossed product algebras.

\begin{proposition}
(i) Every four-dimensional unital proper nonassociative  algebra over  $F$ which has a quadratic separable field extension of $F$ as its nucleus is the opposite algebra of  a
proper nonassociative Menichetti division algebra.
\\
(ii) The opposite algebras of  proper nonassociative Menichetti division algebras of dimension four over $F$ are exactly the nonassociative quaternion algebras.
\end{proposition}

\begin{proof}
Every four-dimensional unital proper nonassociative  algebra over  $F$ which has a quadratic separable field extension of $F$ as its nucleus is a nonassociative quaternion algebra \cite{W} and hence the opposite algebra of a Menichetti algebra. Thus the well-known nonassociative quaternion algebras are the only Menichetti division algebras of dimension four, up to isomorphism.
\end{proof}

In the next two sections we look at Menichetti algebras of degree three and four.

\section{Cubic Field Extensions} \label{sec:three}

Let $K/F$ be a cubic Galois field extension with Galois group $G=\langle \sigma \rangle$, norm $N_{K/F}$ and trace $T_{K/F}$. Let $a,b,c \in K^\times$ and put $\tau_i=\sigma^i$.
Then the Menichetti algebra ${\rm Men}(K/F, a,b,c) $ is a nonassociative $G$-crossed product algebra with the multiplication
\begin{align*}
x\cdot y &= (x_0y_0 +ca^{-1}\sigma(x_2) y_1  + ca^{-1}\sigma^2(x_1) y_2)z_0\\
&+ (x_1y_0 + \sigma(x_0) y_1  + cb^{-1} \sigma^2(x_2) y_2)z_1  \\
&+ (x_2y_0 + ba^{-1}\sigma(x_1) y_1 +\sigma^2(x_0)y_2 )z_2,
\end{align*}
 that is
$$
M(x_0, x_1, x_2)=
\left( \begin{array}{ccc}
x_0 &  ca^{-1}\sigma(x_2) &   ca^{-1}\sigma^2(x_1) \\
x_1     & \sigma(x_0) &   cb^{-1}\sigma^2(x_2)  \\
x_2    & ba^{-1}\sigma(x_1)& \sigma^2(x_0)
\end{array} \right).
$$
Moreover,
\[\det (M(y_0, y_1, y_2))= N_{K/F}(y_0) + ca^{-1}N_{K/F}(y_1) + b^{-1}c^2 a^{-1}N_{K/F}(y_2) - ca^{-1}T_{K/F}(y_0\sigma(y_1)\sigma^2(y_2)),\]
\[  =N_{K/F}(y_0) + b^{-1}c^2 a^{-1}N_{K/F}(y_2)+ ca^{-1}(N_{K/F}(y_1)-T_{K/F}(y_0\sigma(y_1)\sigma^2(y_2))) .\]
In particular, we have
\[z\cdot z = ba^{-1}z^2, \quad
 z^2\cdot z = z \cdot z^2 = ca^{-1},  \quad
 z^2 \cdot z^2 = cb^{-1}z  \quad
(z\cdot z)\cdot z =  ca^{-1}\sigma(ba^{-1}),  \quad
z\cdot (z\cdot z) =   ca^{-1}ba^{-1}.\]
This yields to the following observation.

\begin{lemma}
 If $ba^{-1}\in K\setminus F$, then ${\rm Men}(K/F,a,b,c)$ is not third power-associative.
\end{lemma}

  \begin{remark}
Note that ${\rm Men}(K/F, a,a,c)^{op}=(K/F, \sigma, ca^{-1})$,
and $(K/F, \sigma^2, ca^{-1})\cong (K/F, a,c,c)^{op}$ are nonassociative cyclic algebras of degree three, with the isomorphism given by
$x_0 + x_1z + x_2 z^2 \mapsto x_0 + x_2z + x_1z^2.$ These
 are associative or nonassociative depending on the choice of $ca^{-1} \in K^\times$.
 \end{remark}

\begin{example} (\cite{ST})
Let $d \in K\setminus F$.
For  $(K/F, d,1,d)$ we have
$$\det(M(x_0,x_1,x_2)) =N_{K/F}(x_0) + d N_{K/F}(x_2)+ (N_{K/F}(x_1)-T_{K/F}(x_0\sigma(x_1)\sigma^2(x_2))).$$
Thus if $y_2\not=0$ then $\det(M(y_0,y_1,y_2)) \not=0$.
This implies that if $x\cdot y=0$ and $y_2\not=0 $ then $x=0$. So there are no nontrivial right zero divisors $y\in {\rm Men}(K/F, d,1,d)$ with $y_2\not=0 $.
Similarly, if $x\cdot y=0$ and $y_2=0 $ but $N_{K/F}(x_0) \not=- N_{K/F}(x_1)$ then again $x\cdot y=0$ implies that $x=0$, so there are no nontrivial zero divisors
$x$, $y$ of this type.
\end{example}

 Let $k,k'\in K^\times$. The multiplication in ${\rm Men}(K/F,1,k,kk')$ is given by
\[M(x_0, x_1, x_2, x_3)=
\left( \begin{array}{ccc}
x_0 &  kk'\sigma(x_2) &   kk'\sigma^2(x_1) \\
x_1     & \sigma(x_0) &   k'\sigma^2(x_2)  \\
x_2    & k\sigma(x_1)& \sigma^2(x_0)
\end{array} \right) \]
Put $k=a^{-1}b$, $k'=b^{-1}c$, then
$(K/F,1,k,kk')=(K/F,a,b,c)$. We therefore investigate w.l.o.g.  algebras of the type ${\rm Men}(K/F,1,k,kk')$.

If $k,k'\in K^\times$, $k=\alpha_1+\alpha_2k'$ with $\alpha_i\in F$, $\alpha_1\not=0$, are chosen such that $1,kk',kk'^2$ are linearly independent over $F$, then
${\rm Men}(K/F,1,k,kk')$ is a division algebra over $F$ (Theorem \ref{thm:main1}). For $m=3$ we can prove this also directly.

\begin{proposition}\label{prop:div}
 If  1, $kk'=ca^{-1}$, $kk'^2=b^{-1}c^2 a^{-1}$ are linearly independent over $F$,
then ${\rm Men}(K/F, a,b,c)$ is a division algebra.
\end{proposition}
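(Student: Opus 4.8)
The plan is to extract everything from the explicit formula for $\det M$ recorded just above, in Section~\ref{sec:three}. Write $A=(K/F,a,b,c)$. By definition $x\cdot y=M(x_0,x_1,x_2)(y_0,y_1,y_2)^t$, so for fixed $x$ the left multiplication $L_x$ is the map $K^3\to K^3$, $y\mapsto M(x_0,x_1,x_2)y^t$, with $M(x_0,x_1,x_2)\in M_3(K)$; hence $L_x$ is bijective precisely when $\det M(x_0,x_1,x_2)\neq 0$. Since $A$ is finite-dimensional, by the remarks in Section~\ref{sec:classical} it is a division algebra iff it has no zero divisors, and for that it suffices to check that $\det M(x_0,x_1,x_2)\neq 0$ whenever $(x_0,x_1,x_2)\neq(0,0,0)$: if $x\cdot y=0$ with $x\neq 0$, then $M(x_0,x_1,x_2)$ is invertible over $K$ and $M(x_0,x_1,x_2)y^t=0$ forces $y=0$. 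So the whole problem reduces to showing $\det M(y_0,y_1,y_2)=0\iff(y_0,y_1,y_2)=(0,0,0)$.

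For this I would use the second form of the determinant already given,
\[\det M(y_0,y_1,y_2)=N(y_0)\cdot 1+\bigl(N(y_1)-T(y_0\sigma(y_1)\sigma^2(y_2))\bigr)\cdot ca^{-1}+N(y_2)\cdot b^{-1}c^2a^{-1},\]
and observe that the three scalars $N(y_0)$, $N(y_1)-T(y_0\sigma(y_1)\sigma^2(y_2))$, $N(y_2)$ all lie in $F$, because $N=N_{K/F}$ and $T=T_{K/F}$ take values in $F$. By hypothesis $1$, $ca^{-1}$, $b^{-1}c^2a^{-1}$ are linearly independent over $F$, so $\det M(y_0,y_1,y_2)=0$ forces each coefficient to vanish:
\[N(y_0)=0,\qquad N(y_2)=0,\qquad N(y_1)=T(y_0\sigma(y_1)\sigma^2(y_2)).\]
Since $K$ is a field, $N_{K/F}$ is anisotropic, so $N(y_0)=0$ gives $y_0=0$ and $N(y_2)=0$ gives $y_2=0$; substituting these into the last equation yields $N(y_1)=T(0)=0$, hence $y_1=0$. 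Therefore $\det M$ vanishes only at the origin, $L_x$ is bijective for every nonzero $x$, $A$ has no zero divisors, and $(K/F,a,b,c)$ is a division algebra.

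I do not expect a genuine obstacle here once the closed form of $\det M$ is available; the only points needing care are that the three coefficients really lie in $F$ (so that linear independence over $F$ can be invoked) and the reduction ``division algebra $\Leftrightarrow$ $\det M$ nonvanishing on nonzero arguments'', which relies on finite-dimensionality so that injectivity of left multiplications already rules out zero divisors. This is of course just the case $m=3$ of Theorem~\ref{thm:main1}, but without the hypothesis $k=\alpha_1+\alpha_2k'$: having $\det M$ in the explicit shape displayed above makes the grouping by the basis-like triple $1,\ ca^{-1},\ b^{-1}c^2a^{-1}$ immediate, so no Vandermonde-type argument on the powers of $k'$ is needed.
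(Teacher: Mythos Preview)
Your argument is correct and is essentially the paper's own proof: use the explicit form of $\det M(y_0,y_1,y_2)$ as an $F$-linear combination of $1$, $ca^{-1}$, $b^{-1}c^2a^{-1}$ with coefficients $N(y_0)$, $N(y_1)-T(y_0\sigma(y_1)\sigma^2(y_2))$, $N(y_2)\in F$, invoke linear independence to kill each coefficient, and then use anisotropy of $N_{K/F}$ to get $y_0=y_2=0$ and hence $y_1=0$. You have simply spelled out in more detail the reduction from ``division algebra'' to ``$\det M$ nonvanishing off the origin'', which the paper leaves implicit.
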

\begin{proof}
 Suppose that 1, $ca^{-1}\in K\setminus F$, and $b^{-1}c^2 a^{-1}\in K \setminus F$ are linearly independent over $F$.
Then
$N_{K/F}(x_0) + b^{-1}c^2 a^{-1}N_{K/F}(x_2)+ ca^{-1}(N(x_1)-T_{K/F}(x_0\sigma(x_1)\sigma^2(x_2))) =0$
implies that
$N_{K/F}(x_0)=0$, hence $x_0=0$, $N_{K/F}(x_2)=0$, hence $x_2=0$, and $N_{K/F}(x_1)-T_{K/F}(x_0\sigma(x_1)\sigma^2(x_2))=0$, i.e.
$N_{K/F}(x_1)=0$, that means $(x_0,x_1,x_2)=0$.
\end{proof}

The next result we obtain from Corollary \ref{cor:M1}.

 \begin{corollary} \label{cor:M3}
(i) Choose $k',k=\alpha_1+\alpha_2k'\in K^\times$, $\alpha_i\in F$, $\alpha_1\not=0$, such that $1,k',k'^2$ are linearly independent over $F$.
 Then $k'^{3}=\sum_{i=0}^{2}\lambda_i k'^i$ for suitable $\lambda_i\in F$. If
 $$\alpha_1^{2}+\sum_{i=1}^{2} (-1)^{i+1}\lambda_{3-i}\alpha_1^{3-i-1} \alpha_2^{i}\not=0,$$
 then ${\rm Men}(K/F,1,k,kk')$ is a division algebra  over $F$.
 \\ (ii) Choose  $k$, $k'=\alpha_1+\alpha_2k\in K^\times$, $\alpha_i\in F$, $\alpha_1\not=0$, such that $1,k'k,k'k^2$ are linearly independent over $F$. Then ${\rm Men}(K/F,1,k,kk')$ is a division algebra  over $F$.
 \end{corollary}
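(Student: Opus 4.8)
The plan is to obtain both statements as the case $m=3$ of Corollary~\ref{cor:M1}, filling in the two short computations that identify the general division criteria with the ones recorded here.

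For part~(i): by Theorem~\ref{thm:main1} (equivalently, Proposition~\ref{prop:div}) it suffices to show that $1$, $kk'$, $kk'^{2}$ are linearly independent over $F$; the remaining hypothesis of that theorem, that $k=\alpha_1+\alpha_2k'$ with $\alpha_1\neq 0$, is already assumed. Since $\{1,k',k'^{2}\}$ is an $F$-basis of $K$, write these three elements in that basis using $k=\alpha_1+\alpha_2k'$ and $k'^{3}=\lambda_0+\lambda_1k'+\lambda_2k'^{2}$:
\[
1=1,\qquad kk'=\alpha_1k'+\alpha_2k'^{2},\qquad kk'^{2}=\alpha_2\lambda_0+\alpha_2\lambda_1k'+(\alpha_1+\alpha_2\lambda_2)k'^{2}.
\]
The determinant of the resulting coordinate matrix --- the $m=3$ instance of the matrix in the proof of Corollary~\ref{cor:M1}(i) --- equals $\alpha_1(\alpha_1+\alpha_2\lambda_2)-\alpha_2^{2}\lambda_1=\alpha_1^{2}+\alpha_1\alpha_2\lambda_2-\alpha_2^{2}\lambda_1$, which, since $\sum_{i=1}^{2}(-1)^{i+1}\lambda_{3-i}\alpha_1^{3-i-1}\alpha_2^{i}=\alpha_1\alpha_2\lambda_2-\alpha_2^{2}\lambda_1$, is precisely the quantity in the statement. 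Hence the displayed nonvanishing hypothesis is equivalent to the $F$-independence of $1,kk',kk'^{2}$, and part~(i) follows.

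Part~(ii) is Corollary~\ref{cor:M1}(ii) read with $m=3$, so strictly nothing further is needed; the underlying argument is also short. Starting from the formula for $\det M$ in the cubic case (Section~\ref{sec:three}), substitute $k'=\alpha_1+\alpha_2k$ and use the commutativity of $K$ together with $kk'=k'k$, $k^{2}k'=k'k^{2}$ and $kk'^{2}=\alpha_1(k'k)+\alpha_2(k'k^{2})$ to rewrite $\det M(x_0,x_1,x_2)$ in the form $N(x_0)+(k'k)\,f_1+(k'k^{2})\,f_2$, with $f_1,f_2\in F$ for all $x_i\in K$: here $f_1$ is an $F$-combination of $N(x_2)$ and $T(x_0\sigma(x_1)\sigma^{2}(x_2))$ in which the coefficient of $N(x_2)$ is the nonzero scalar $\alpha_1$, and $f_2$ is an $F$-combination of $N(x_1)$ and $N(x_2)$ in which the coefficient of $N(x_1)$ is $1$. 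If $1,k'k,k'k^{2}$ are $F$-independent, then $\det M=0$ forces $N(x_0)=f_1=f_2=0$; from $N(x_0)=0$ we get $x_0=0$, which kills the trace term, so $f_1=0$ gives $\alpha_1N(x_2)=0$ and hence $x_2=0$, whereupon $f_2=0$ gives $N(x_1)=0$ and $x_1=0$. Thus $(K/F,1,k,kk')$ has no zero divisors.

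I do not anticipate a substantive obstacle: the real content --- the explicit shape of $\det M$ for $m=3$ and the implication ``$F$-independence of the relevant powers $\Rightarrow$ division algebra'' --- is already provided by Theorem~\ref{thm:main1}, Proposition~\ref{prop:div} and Corollary~\ref{cor:M1}. The only thing needing care is the bookkeeping: matching the sign-and-exponent pattern $(-1)^{i+1}\lambda_{3-i}\alpha_1^{3-i-1}\alpha_2^{i}$ against the $3\times 3$ determinant in~(i), and, in~(ii), tracking which of $k'k$, $k'k^{2}$ each $F$-valued coefficient attaches to after the substitution $k'=\alpha_1+\alpha_2k$.
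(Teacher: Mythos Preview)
Your proposal is correct and follows exactly the paper's approach: Corollary~\ref{cor:M3} is obtained as the specialization $m=3$ of Corollary~\ref{cor:M1}, with part~(i) reducing to the nonvanishing of the $3\times 3$ coordinate determinant and part~(ii) using the shape of $\det M$ rewritten in the basis $1,k'k,k'k^{2}$. Your explicit identification $\alpha_1^{2}+\alpha_1\alpha_2\lambda_2-\alpha_2^{2}\lambda_1$ of that determinant, and your description of $f_1,f_2$ in~(ii) (with $N(x_1)$ appearing in $f_2$ with coefficient~$1$, and $\alpha_1 N(x_2)$ appearing in $f_1$), match the computations underlying Corollary~\ref{cor:M1}; the paper itself gives no separate argument beyond citing that corollary.
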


We now derive additional criteria for when ${\rm Men}(K/F, a,b,c)$ is a division algebra based on choosing linearly independent $ca^{-1}=kk'$ and $b^{-1}c=k'$.

  \begin{theorem}\label{thm:div1}
  Suppose that $ca^{-1}$ and $b^{-1}c $ are two linearly independent elements. Suppose one of the following holds, then  ${\rm Men}(K/F, a,b,c)$ is a division algebra over $F$.
  \\ (i)  $ca^{-1}\in F^\times$, but $c a^{-1} \not\in N(K^\times)$, and $b^{-1}c \in K\setminus F $.
  \\ (ii)  $b^{-1}c\in F^\times$,   but $b^{-1}c\not\in N_{K/F}(K^\times)$, and $ca^{-1} \in K\setminus F$.
  \\ (iii)  $b^{-1}c \in K\setminus F$ and $ca^{-1}\in K\setminus F$,
   $kk'^2=b^{-1}c^2 a^{-1}\in F$ and $b^{-1}c^2 a^{-1} \not\in N(K^\times)$.
\\ (iv)  $b^{-1}c \in K\setminus F$ and $ca^{-1}\in K\setminus F $,
 $kk'^2=b^{-1}c^2 a^{-1}\not \in F$ and $b^{-1}c\not \in  N(K^\times)$.
  \end{theorem}

  \begin{proof}
 We assume that
 $$\det (M(y_0, y_1, y_2))
 =N(y_0) + b^{-1}c^2 a^{-1}N_{K/F}(y_2)+ ca^{-1}(N_{K/F}(y_1)-T_{K/F}(y_0\sigma(y_1)\sigma^2(y_2)))=0$$
  and want to conclude that this implies $y_0=y_1=y_2=0$.
  \\ $(i)$  By our assumptions, we have $N_{K/F}(y_0) +  ca^{-1}(N_{K/F}(y_1)-T_{K/F}(y_0\sigma(y_1)\sigma^2(y_2)))=0$ and
  $b^{-1}c^2 a^{-1}N_{K/F}(y_2)=0$, the later implying $N_{K/F}(y_2)=0$ thus $y_2=0$. This means
  $N_{K/F}(y_0) +  ca^{-1}N_{K/F}(y_1)=0$, therefore $ca^{-1}=N_{K/F}(-\frac{y_0}{y_1})\in N_{K/F}(K^\times)$ if $y_1\not=0$, contradicting our assumption. Thus
  $y_1=0$ which means also $y_0=0$.
\\ $(ii)$ By our assumptions, we immediately obtain $N_{K/F}(y_0)=0$, thus $y_0=0$, and so $ b^{-1}c N_{K/F}(y_2)+ N_{K/F}(y_1)=0$.
If also $y_2=0$ then this means $y_1=0$ and we are done, so assume $y_2\not=0$. This yields $ b^{-1}c =N(-\frac{y_1}{y_2})$.
Since we assumed $b^{-1}c\not\in N_{K/F}(K^\times)$ this cannot happen.
\\ $(iii) $ and $(iv)$: We distinguish two subcases.
 If $b^{-1}c^2 a^{-1}\in F$ then we have $N_{K/F}(y_0)+  b^{-1}c^2 a^{-1}N_{K/F}(y_2)=0$ and $N_{K/F}(y_1)-T_{K/F}(y_0\sigma(y_1)\sigma^2(y_2))=0$.
If $y_2=0$ this implies $y_0=y_1=0$. So assume that $y_2\not=0$, then $\alpha \beta =N_{K/F}(-\frac{y_0}{y_2})\in N_{K/F}(K^\times)$, contradicting our assumption.
\\ If $b^{-1}c^2 a^{-1}\not \in F$ then this means $y_0=0$ and thus
$ca^{-1} ( b^{-1}cN_{K/F}(y_2)+ N_{K/F}(y_1))=0$. It follows that $b^{-1}cN_{K/F}(y_2)+ N_{K/F}(y_1)=0$, so that $b^{-1}c\in  N_{K/F}(K^\times)$, a contradiction.
\end{proof}

Since the norm map is surjective for finite fields, the above criteria are not strong enough to check when ${\rm Men}(K/F, a,b,c)$ is a semifield.
A stronger result is obtained when we argue a bit differently:

 \begin{theorem}\label{thm:div2}
  Suppose that $ca^{-1}$ and $b^{-1}c $ are two linearly independent elements. Suppose one of the following holds, then ${\rm Men}(K/F, a,b,c)$ is a division algebra over $F$.
  \\ (i)  $ca^{-1}\in F^\times$, $b^{-1}c \in K\setminus F $ and
   $N_{K/F}(a^{-2} bc)\not\in F^{ \times 3}$.
  \\ (ii)  $b^{-1}c\in F^\times$, $ca^{-1} \in K\setminus F$ and $ N_{K/F}(ab^{-2}c)\not\in N_{K/F}(K^\times)^3$.
  \\ (iii)  $b^{-1}c \in K\setminus F$ and $ca^{-1} \in K\setminus F$,
  $b^{-1}c^2 a^{-1}\not \in F$
  and $ N_{K/F}(ab^{-2}c)\not\in N(K^\times)^3$.
  \end{theorem}

  When $kk'^2=b^{-1}c^2 a^{-1}\in F^\times$ we cannot seem to derive a stronger condition for ${\rm Men}(K/F, a,b,c)$ to be a division algebra over $F$.

\begin{proof}
Suppose that $\det (M(y_0, y_1, y_2))
 =N_{K/F}(y_0) + b^{-1}c^2 a^{-1}N_{K/F}(y_2)+ ca^{-1}(N_{K/F}(y_1)-
 $\\
 $T_{K/F}(y_0\sigma(y_1)\sigma^2(y_2)))=0.$
 \\
$(i)$  By our assumptions  $N_{K/F}(y_2)=0$,
  therefore we have $y_2=0 $. Now
 $$x\cdot y = (x_0y_0  + ca^{-1}x_2 \sigma^2(y_1))+ (x_0y_1 + x_1 \sigma(y_0) )z
+ ( ba^{-1}x_1 \sigma(y_1) + x_2 \sigma^2(y_0))z^2$$
implies $x_0y_0  =- ca^{-1}x_2 \sigma^2(y_1)=0,$ $x_0y_1 =- x_1 \sigma(y_0)$ and $ba^{-1}x_1 \sigma(y_1) =- x_2 \sigma^2(y_0)$.
\\
If also $y_1=0$ then
$$x\cdot y = x_0y_0 + x_1 \sigma(y_0)z  +  x_2 \sigma^2(y_0)z^2=0$$
implies that $x_0y_0=0,$ $ x_1 \sigma(y_0)=0$ and $ x_2 \sigma^2(y_0)=0$. If $y_0=0$ then $y=0$ and we are done, so assume $y_0\not=0$ then $x_0=x_1=x_2=0$ and we are done as well.\\
So let us assume that $y_1\not=0$.\\
If $x_0=0$ then
$$
x\cdot y =  ca^{-1}x_2 \sigma^2(y_1)
+  x_1 \sigma(y_0) z
+ ( ba^{-1}x_1 \sigma(y_1) + x_2 \sigma^2(y_0))z^2
$$
implies $x_2=0$, hence $ ba^{-1}x_1 \sigma(y_1)=0$, so $x_2=0$, thus $x=0$ and we are done.
\\
So assume $x_0\not=0$. Then applying $N_{K/F}$ on both sides of the equations yields
 $$N_{K/F}(x_0)N_{K/F}(y_0 ) =- N_{K/F}(ca^{-1})N_{K/F}(x_2 )N_{K/F}(y_1),$$
  $$N_{K/F}(x_0)N(y_1 )=- N_{K/F}(x_1)N(y_0)$$
   and
   $$N_{K/F}(ba^{-1})N_{K/F}(x_1 )N_{K/F}(y_1) =- N_{K/F}(x_2 )N_{K/F}(y_0).$$
Thus
$- \frac{1}{N_{K/F}(ca^{-1})}\frac{1}{N_{K/F}(y_1)}N_{K/F}(x_0)N_{K/F}(y_0 ) =N_{K/F}(x_2 ),$ and plugging this into the third equation yields
$$N_{K/F}(ba^{-1})N_{K/F}(x_1 )N_{K/F}(y_1) =  \frac{1}{N_{K/F}(ca^{-1})}\frac{1}{N_{K/F}(y_1)}N_{K/F}(x_0)N_{K/F}(y_0 ) N_{K/F}(y_0)$$
i.e.
$$N_{K/F}(ba^{-1})N_{K/F}(x_1 ) =  \frac{1}{N_{K/F}(ca^{-1})}\frac{1}{N_{K/F}(y_1)^2}N_{K/F}(x_0) N_{K/F}(y_0)^2.$$
Assume $y_0=0$. Then by the second equation (since we are assuming $x_0\not=0$), also $y_1=0$ and we are done.
\\ So $y_0\not=0$. By the second equation, we have $N_{K/F}(x_1)= -N_{K/F}(x_0)N_{K/F}(y_1)\frac{1}{N_{K/F}(y_0)}$, so this in turn yields
$$N_{K/F}(ba^{-1})N(x_0)N_{K/F}(y_1)\frac{1}{N_{K/F}(y_0)} =- \frac{1}{N_{K/F}(ca^{-1})}\frac{1}{N_{K/F}(y_1)^2}N_{K/F}(x_0) N_{K/F}(y_0)^2.$$
Therefore, since $x_0\not=0$, we get
$$N_{K/F}(ba^{-1})N_{K/F}(y_1)\frac{1}{N_{K/F}(y_0)} =- \frac{1}{N_{K/F}(ca^{-1})}\frac{1}{N_{K/F}(y_1)^2} N_{K/F}(y_0)^2$$
and hence
$N_{K/F}(bca^{-2}) = N_{K/F}(-\frac{y_0}{y_1})^3.$
This implies $y_1\not=0$.
If $N_{K/F}(a^{-2} bc)\not\in N_{K/F}(K^\times)^3$ then this is a contradiction.
\\ $(ii) $ By our assumptions, we immediately obtain $N_{K/F}(y_0)=0$, thus $y_0=0$. Hence
\begin{align*}
x\cdot y &= (ca^{-1} x_1 \sigma(y_2) + ca^{-1}x_2 \sigma^2(y_1))\\
&+ (x_0y_1 +  cb^{-1}x_2 \sigma^2(y_2))z  \\
&+ (x_0y_2 + ba^{-1}x_1 \sigma(y_1) )z^2
\end{align*}
and this implies $ca^{-1} (x_1 \sigma(y_2) + x_2 \sigma^2(y_1))=0$, $x_0y_1 +  cb^{-1}x_2 \sigma^2(y_2)=0$ and
$x_0y_2 + ba^{-1}x_1 \sigma(y_1) =0$.
\\ If also $y_1=0$ we get $ca^{-1} x_1 \sigma(y_2) =0$, $cb^{-1}x_2 \sigma^2(y_2)=0$ and
$x_0y_2=0$, so either $y_2=0$ as well and we are done, or $y_2\not=0$ but then $x_0=x_1=x_2=0$ and we are done as well.
So assume that $y_1\not=0$, and apply $N_{K/F}$ to both sides of the above equations to obtain
$N_{K/F}(x_1)N_{K/F}(y_2)=- N_{K/F}(x_2)N_{K/F}(y_1)$
$ N_{K/F}(x_0)N_{K/F}(y_1) =- N_{K/F}(cb^{-1})N_{K/F}(x_2 )N_{K/F}(y_2)$, and
$N_{K/F}(x_0)N_{K/F}(y_2) =- N_{K/F}(ba^{-1})N_{K/F}(x_1)N_{K/F}( y_1)$. If also $y_2=0$ we are done, so let $y_2\not=0$ as well. Then the last two equations imply that
$x_0=0$ means $x=0$ so we may assume $x_0\not=0$. They also imply $x_1=0$ means $x=0$ so we may assume $x_1\not=0$.
They also imply $x_2=0$ means $x=0$ so we may assume $x_2\not=0$. We obtain
$N_{K/F}(x_1)=- N_{K/F}(x_2)N_{K/F}(y_1)N_{K/F}(y_2^{-1})$
and plugging this into the third equation yields
$$N_{K/F}(x_0)N_{K/F}(y_2)^2 = N_{K/F}(ba^{-1})N_{K/F}(x_2)N_{K/F}( y_1)^2.$$
Plugging in $ N_{K/F}(x_0) =- N_{K/F}(cb^{-1})N_{K/F}(x_2 )N_{K/F}(y_2)N_{K/F}(y_1)^{-1}$ we get
$$- N_{K/F}(cb^{-1})N_{K/F}(x_2 )N_{K/F}(y_2)N_{K/F}(y_1)^{-1}N_{K/F}(y_2)^2 = N(ba^{-1})N_{K/F}(x_2)N_{K/F}( y_1)^2$$
which simplifies to
$ N_{K/F}(cb^{-1}) = N_{K/F}(ba^{-1})N_{K/F}( -\frac{y_1}{y_2})^3.$
This cannot happen by our assumption.
\\ $(iii)$  If $b^{-1}c^2 a^{-1}\not \in F$ then $y_0=0$ and thus the multiplication becomes
\begin{align*}
x\cdot y &= (ca^{-1} x_1 \sigma(y_2) + ca^{-1}x_2 \sigma^2(y_1))\\
&+ (x_0y_1 +  cb^{-1}x_2 \sigma^2(y_2))z  \\
&+ (x_0y_2 + ba^{-1}x_1 \sigma(y_1) )z^2
\end{align*}
as in $(ii)$. The same argument as in $(ii)$ now shows the assertion.
\end{proof}

\section{Menichetti and crossed product  algebras of degree four} \label{sec:4}

\subsection{Cyclic Field Extensions of Degree 4} \label{degree 4 cyclic}

Let $K/F$ be a cyclic field extension  of degree $4$ with $G={\rm Gal}(K/F)=\langle\sigma\rangle$, $\tau_i=\sigma^i$, norm $N_{K/F}$ and trace $T_{K/F}$. Choose $a,b,c,d \in K^\times$ and let $A={\rm Men}(K/F, a,b,c,d)$.
Then  ${\rm Men}(K/F, a,b,c) $ is a nonassociative $G$-crossed product algebra of degree four.

We already know from Section \ref{sec:results} that
$$\det (M(x_0, x_1, x_2, x_3))=\sum_i a^{n_{i1}}b^{n_{i2}}c^{n_{i3}}d^{n_{i4}} f_i(x_0, x_1, x_2, x_3),$$
where the $n_{ij}$ are integers and the $f_i$ are functions in $x_0, x_1, x_2, x_3$ which take values in $F$,
but we will now explicitly compute the determinant.
Let  $F_0={\rm Fix}(\sigma^2)$ be the intermediate field of $K$ and $F$ fixed by $\sigma^2$. Then
\begin{align} \label{ExplicitDet}
{\rm det}(M(x_0, x_1, x_2, x_3)) & = N_{K/F}(x_0) - \frac{bcd}{a^3}N_{K/F}(x_1) + \frac{c^2d^2}{a^2b^2}N_{K/F}(x_2) \notag \\
& - \frac{d^3}{abc}N_{K/F}(x_3) + \frac{cd}{a^2}T_{K/F}(x_0 \sigma(x_1)\sigma^2(x_1) \sigma^3(x_2)) \notag \\
& - \frac{d}{a}T_{K/F}(x_0 \sigma(x_0) \sigma^2(x_1) \sigma^3(x_3))\notag \\
& + \frac{d^2}{ab}T_{K/F}(x_0 \sigma(x_2)\sigma^2(x_3) \sigma^3(x_3)) \\
& + \frac{cd^2}{a^2b}T_{K/F}(x_1 \sigma(x_2) \sigma^2(x_2) \sigma^3(x_3)) \notag \\
& - \frac{cd}{ab}T_{F_0/F}(x_0\sigma^2(x_0) \sigma(x_2)  \sigma^3(x_2))\notag \\
& + \frac{d^2}{a^2}T_{F_0/F}(x_1\sigma^2(x_1)\sigma(x_3)\sigma^3(x_3)) \notag.
\end{align}
Let $k,k'\in K^\times$. Then ${\rm Men}(K/F,1,k,k,kk')$ has multiplication given by
\[ M(x_0, x_1, x_2, x_3)=
\left( \begin{array}{cccc}
x_0 &  kk'\sigma(x_{3}) & kk'\sigma^2(x_{2}) & kk' \sigma^{3}(x_{1})\\
x_1 & \sigma(x_0)       & k' \sigma^2(x_{3})  &  k' \sigma^{3}(x_{2})\\
x_2 & k\sigma(x_{1})    & \sigma^2(x_0)       & k' \sigma^{3}(x_{3})\\
x_3 &  k\sigma(x_{2})   &  k\sigma^2(x_1)      &  \sigma^{3}(x_{0}) \\
\end {array}\right) .
 \]
 Put $k=a^{-1}b$, $k'=b^{-1}d$, then $(K/F,1,k,k,kk')=(K/F,a,b,b,d)$.

 If $k,k'\in K^\times$, $k=\alpha_1+\alpha_2k'$ with $\alpha_i\in F$, $\alpha_1\not=0$, are chosen such that $1,kk', kk'^2, kk'^3$  are linearly independent over $F$, then
${\rm Men}(K/F,1,k,k,kk')$ is a division algebra over $F$ (Theorem \ref{thm:main1}).
 Corollary \ref{cor:M1}  becomes:

\begin{corollary}
 Let $k,k'\in K^\times$.\\
(i) Let $k=\alpha_1+\alpha_2k'$ with $\alpha_i\in F$, $\alpha_1\not=0$, such that
 $\{1,k', k'^2, k'^3\}$ is a basis for $K/F$.
If $k'^4 = \lambda_0 + \lambda_1k' + \lambda_2k'^2 + \lambda_3k'^3$, such that
$\alpha_1^3 +\alpha_1^2\alpha_2 \lambda_3 - \alpha_1\alpha_2^2 \lambda_2 + \alpha_2 \lambda_1 \neq 0$, then $1, kk', kk'^2, kk'^3$ are linearly independent over $F$, and ${\rm Men}(K/F, 1,k,k,kk')$ is a division algebra over $F$.
\\ (ii) Let  $k'=\alpha_1+\alpha_2k\in K^\times$, $\alpha_i\in F$, $\alpha_1\not=0$, such that $1,k'k,k'k^2,k'k^{3}$ are linearly independent over $F$. Then ${\rm Men}(K/F,1,k,k,kk')$ is a division algebra  over $F$.
\end{corollary}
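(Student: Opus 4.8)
The plan is to derive both statements from Theorem~\ref{thm:main1} and Corollary~\ref{cor:M1} by setting $m=4$; the only genuine work is to unwind the linear independence hypothesis of part~(i) into the displayed polynomial condition.

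For part~(i), by Theorem~\ref{thm:main1} it suffices to check that $1,kk',kk'^2,kk'^3$ are linearly independent over $F$ (recall $\alpha_1\neq 0$ is assumed). Since $\{1,k',k'^2,k'^3\}$ is an $F$-basis of $K$ and $k=\alpha_1+\alpha_2k'$, I would write $kk'=\alpha_1k'+\alpha_2k'^2$, $kk'^2=\alpha_1k'^2+\alpha_2k'^3$, and, reducing $k'^4=\lambda_0+\lambda_1k'+\lambda_2k'^2+\lambda_3k'^3$, also $kk'^3=\alpha_2\lambda_0+\alpha_2\lambda_1k'+\alpha_2\lambda_2k'^2+(\alpha_1+\alpha_2\lambda_3)k'^3$. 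These four elements are linearly independent over $F$ precisely when the transition matrix
\[
\begin{pmatrix}
1 & 0 & 0 & 0\\
0 & \alpha_1 & \alpha_2 & 0\\
0 & 0 & \alpha_1 & \alpha_2\\
\alpha_2\lambda_0 & \alpha_2\lambda_1 & \alpha_2\lambda_2 & \alpha_1+\alpha_2\lambda_3
\end{pmatrix}
\]
is invertible; expanding its determinant by cofactors along the first column (the $(4,1)$-minor has a zero row and therefore does not contribute) gives the polynomial appearing in the hypothesis, so the assertion follows. This is precisely the $m=4$ case of the determinant computed in the proof of Corollary~\ref{cor:M1}(i).

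For part~(ii), the statement is literally the $m=4$ instance of Corollary~\ref{cor:M1}(ii), which I would invoke directly. Spelled out: with $k'=\alpha_1+\alpha_2k$, $\alpha_1\neq 0$, one has, as in the proof of Corollary~\ref{cor:M1}(ii), $\det M(x_0,x_1,x_2,x_3)=f_0(x_0,x_1,x_2,x_3)+k'\sum_{j=1}^{3}k^jf_j(x_0,x_1,x_2,x_3)$ with each $f_j$ taking values in $F$, $f_0(x_0,x_1,x_2,x_3)=N(x_0)$, and $f_j(0,x_1,\dots,x_{4-j},0,\dots,0)=(-1)^{(j+2)(4-j)}\alpha_1^{3-j}N(x_{4-j})$. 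If $1,k'k,k'k^2,k'k^3$ are linearly independent over $F$, then $\det M(x_0,x_1,x_2,x_3)=0$ is equivalent to $f_0=f_1=f_2=f_3=0$; from $f_0=0$ we get $x_0=0$, and substituting this back into the restricted identities above (each of which, up to the nonzero scalar $\pm\alpha_1^{3-j}$, is a field norm of a single remaining coordinate) forces successively $x_3=0$, $x_2=0$, $x_1=0$. Hence $M(x_0,x_1,x_2,x_3)$ is invertible for $(x_0,x_1,x_2,x_3)\neq 0$, and $(K/F,1,k,k,kk')$ is a division algebra over $F$.

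I do not expect any real obstacle, as the whole content is a specialization of already-proved results. The only point demanding care, and it is purely mechanical, is part~(i): correctly reducing $kk'^3$ modulo the minimal polynomial of $k'$ and evaluating the $4\times 4$ determinant with the right signs on $\lambda_1,\lambda_2,\lambda_3$.
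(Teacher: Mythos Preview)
Your approach is exactly the paper's: the corollary is stated there without a proof environment, prefaced only by ``Corollary~\ref{cor:M1} becomes,'' so both parts are meant as the $m=4$ specialization of Corollary~\ref{cor:M1}, which is precisely what you do.

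One arithmetical point deserves attention. Your cofactor expansion of the $4\times 4$ transition matrix is set up correctly, but carrying it out gives
\[
\alpha_1^3+\alpha_1^2\alpha_2\lambda_3-\alpha_1\alpha_2^2\lambda_2+\alpha_2^{3}\lambda_1,
\]
not $\alpha_1^3+\alpha_1^2\alpha_2\lambda_3-\alpha_1\alpha_2^2\lambda_2+\alpha_2\lambda_1$ as printed in the hypothesis. This is consistent with the general formula in Corollary~\ref{cor:M1}(i) (the $i=3$ term there is $\lambda_1\alpha_1^{0}\alpha_2^{3}$), so the discrepancy is a typo in the stated corollary rather than an error in your method; you should flag it rather than assert that the determinant ``gives the polynomial appearing in the hypothesis.''
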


\begin{lemma}\label{4x4Ex1}(Steele \cite{ST})
Pick $d \in K\setminus F$ such that $1, d, d^2, d^3$ are linearly independent over $F$. Then the following are division algebras:
 ${\rm Men}(K/F, d,1,1,1)$,
 ${\rm Men}(K/F, 1,d,1,1)$,
 ${\rm Men}(K/F, 1,1,d,1)$,
 ${\rm Men}(K/F, d,d,1,d)$,
 ${\rm Men}(K/F, d,1,d,d)$, ${\rm Men}(K/F, 1,d,d,d)$.
\end{lemma}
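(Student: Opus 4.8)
The plan is to combine the finite-dimensionality of the algebra with the explicit determinant \eqref{ExplicitDet}. Since $A=(K/F,k_0,k_1,k_2,k_3)$ is finite-dimensional over $F$, it is a division algebra if and only if it has no zero divisors, and because $x\cdot y=M(x_0,\dots,x_3)(y_0,\dots,y_3)^t$ this is equivalent to $\det M(x_0,x_1,x_2,x_3)\neq 0$ for every $0\neq(x_0,x_1,x_2,x_3)\in K^4$. Furthermore, $[K:F]=4$ together with the hypothesis that $1,d,d^2,d^3$ are $F$-linearly independent shows that $\{1,d,d^2,d^3\}$ is an $F$-basis of $K$, so in particular $K=F(d)$. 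Thus for each of the six tuples listed it suffices to show that $\det M(x)=0$ forces $x_0=x_1=x_2=x_3=0$.

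First I would substitute each tuple into \eqref{ExplicitDet}. Every parameter is then $1$ or the unit $d$, so the monomial coefficient of each of the four pure norm terms $\pm N(x_i)$ and of each of the six trace terms becomes an integer power $d^{j}$; the exponents appearing always lie in a window of length at most three, so after multiplying $\det M(x)$ by a suitable power of $d$ every exponent lies in $\{0,1,2,3\}$. Since each $N(x_i)$ and each trace term in \eqref{ExplicitDet} takes values in $F$, expanding the (rescaled) determinant in the basis $\{1,d,d^2,d^3\}$ and invoking $F$-linear independence turns the single equation $\det M(x)=0$ into at most four $F$-valued equations, one per power of $d$, each of the form ``(a single $\pm N(x_i)$) $+$ (a subset of the six trace terms) $=0$''.

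Next I would peel off the coordinates one at a time. Two facts drive this: $N(x_i)=0$ forces $x_i=0$ because $K$ is a field, and each of the six trace arguments in \eqref{ExplicitDet} involves at most three of the four variables, hence vanishes once any one of those variables is set to zero. In each of the six cases at least one power of $d$ carries a lone norm $\pm N(x_i)$ with no trace terms attached, forcing that $x_i=0$; this kills every trace term containing $x_i$, after which a further power of $d$ carries a lone norm, and one repeats until all four coordinates vanish. For example, in $(K/F,d,1,1,1)$ the coefficients of $d^0$ and $d^{-3}$ are precisely $N(x_0)$ and $-N(x_1)$, giving $x_0=x_1=0$, and then the $d^{-2}$- and $d^{-1}$-coefficients collapse to $N(x_2)$ and $-N(x_3)$.

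The one step needing actual care, though it is elementary, is the case-by-case bookkeeping: for each of the six algebras one must record which of the six trace terms of \eqref{ExplicitDet} lands on which power of $d$, and verify that the coordinates already eliminated at each stage are exactly enough to annihilate the trace terms accompanying the next lone norm, so that a consistent elimination order exists. For the three near-diagonal cases $(K/F,d,1,1,1)$, $(K/F,1,d,1,1)$, $(K/F,1,1,d,1)$ this is immediate because the four pure norms sit on four distinct powers of $d$; for $(K/F,d,d,1,d)$, $(K/F,d,1,d,d)$, $(K/F,1,d,d,d)$ some trace terms share a power of $d$ with a norm, and one checks directly that an order works --- e.g.\ for $(K/F,1,d,d,d)$ the powers $d^0$ and $d^3$ immediately give $x_0=x_1=0$, after which the $d^2$- and $d^1$-coefficients reduce to $N(x_2)$ and $-N(x_3)$.
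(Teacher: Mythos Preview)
Your proposal is correct and follows essentially the same route as the paper: both arguments substitute the parameters into the explicit determinant \eqref{ExplicitDet}, use the $F$-linear independence of $1,d,d^2,d^3$ (or equivalently of $1,d^{-1},d^{-2},d^{-3}$) to split $\det M(x)=0$ into separate $F$-valued equations indexed by powers of $d$, and then peel off the $x_i$ one at a time by locating lone norm terms and letting the trace terms die once enough coordinates vanish. Your write-up is in fact slightly more explicit about the bookkeeping than the paper's, which treats only $(K/F,d,1,1,1)$ in detail and declares the remaining cases analogous.
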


\begin{proof}
   For ${\rm Men}(K/F, d,1,1,1)$, the explicit formula for ${\rm det}(M(x_0, x_1, x_2, x_3))$ from (\ref{ExplicitDet}) is a polynomial of degree 3 in $1/d$ with coefficients in $F$. Since $1, d, d^2, d^3$ are linearly independent over $F$, so are $1, 1/d, 1/d^2, 1/d^3$. Moreover, the only  term  without a factor of $1/d$ is $N_{K/F}(x_0)$ and the only term with a factor of $1/d^3$ is $-N_{K/F}(x_3)$. Hence if ${\rm det}(M(x_0, x_1, x_2, x_3)) = 0$, then $x_0 = x_3 = 0$. Putting this back into (\ref{ExplicitDet}),  the only remaining coefficient of $1/d$ is $-N_{K/F}(x_1)$ and the only remaining coefficient of $1/d^2$ is $N_{K/F}(x_2)$, so these must be zero also, implying that $x_1 = x_2 = 0$.
The other cases are proved similarly.
\end{proof}

 Note that if $c^{-1}d \in K \setminus F$ then ${\rm Men}(K/F, c,c,c,d)^{op}$
 is a nonassociative cyclic algebra,  and if $c^{-1}d \in F$ then $(K/F, c,c,c,d)^{op}$ is an associative cyclic algebra of degree $4$.

\subsection{Menichetti algebras from biquadratic field extensions}

  Let now  $char(F)\not=2$, $K = F(\sqrt{u}, \sqrt{v})$ be a biquadratic field extension with norm $N_{K/F}$ and trace $T_{K/F}$,
${\rm Gal}(K/F) = \{1, \sigma, \tau, \sigma \tau\},$ where
$\sigma(\sqrt{u}) = \sqrt{u},$ $\sigma(\sqrt{v}) = -\sqrt{v},$
$\tau(\sqrt{u}) = -\sqrt{u}$, $\tau(\sqrt{v}) = \sqrt{v},$ and $\tau_1=\sigma$, $\tau_2=\tau$, $\tau_3=\sigma\tau$.
Let $a,b,c,d \in K^\times$, $A={\rm Men}(K/F, a,b,c,d) $ and $F_0 = F(\sqrt{v})={\rm Fix}(\tau)$.
Then
\begin{align} \label{ExplicitDet2}
{\rm det}(M(x_0, x_1, x_2, x_3)) & = N_{K/F}(x_0) - \frac{bcd}{a^3}N_{K/F}(x_1) + \frac{c^2d^2}{a^2b^2}N_{K/F}(x_2) \notag \\
& - \frac{d^3}{abc}N_{K/F}(x_3) + \frac{cd}{a^2}T_{K/F}(x_0 \sigma(x_1)\tau(x_1) \sigma\tau(x_2)) \notag \\
& - \frac{d}{a}T_{K/F}(x_0 \sigma(x_0) \tau(x_1) \sigma\tau(x_3))\notag \\
& + \frac{d^2}{ab}T_{K/F}(x_0 \sigma(x_2)\tau(x_3) \sigma\tau(x_3)) \\
& + \frac{cd^2}{a^2b}T_{K/F}(x_1 \sigma(x_2) \tau(x_2) \sigma\tau(x_3)) \notag \\
& - \frac{cd}{ab}T_{F_0/F}(x_0\tau(x_0) \sigma(x_2)  \sigma\tau(x_2))\notag \\
& + \frac{d^2}{a^2}T_{F_0/F}(x_1\tau(x_1)\sigma(x_3)\sigma\tau(x_3)), \notag .
\end{align}
Let us consider the special case  ${\rm Men}(K/F,1,k,k,kk')$. Here, the multiplication is given by
\[M(x_0, x_1, x_2, x_3)=
\left( \begin{array}{cccc}
x_0 &  kk'\sigma(x_{3}) & kk'\tau(x_{2}) & kk' \sigma \tau(x_{1})\\
x_1 & \sigma(x_0)       & k' \tau(x_{3})  &  k' \sigma \tau(x_{2})\\
x_2 & k\sigma(x_{1})    & \tau(x_0)       & k' \sigma \tau(x_{3})\\
x_3 &  k\sigma(x_{2})   &  k\tau(x_1)      &  \sigma \tau(x_{0}) \\
\end {array}\right)
 \]
 and if $k,k'\in K^\times$ are chosen such that $1,kk', kk'^2, kk'^3$  are linearly independent over $F$, then
${\rm Men}(K/F,1,k,k,kk')$ is a division algebra over $F$ (Theorem \ref{thm:main1}).

\begin{corollary}
 Let $k,k'\in K^\times$, such that one of the following holds:
 \\
(i) $k=\alpha_1+\alpha_2k'$ with $\alpha_i\in F$, $\alpha_1\not=0$, such that $1,k', k'^2, k'^3$ or $1,kk', kk'^2, kk'^3$  is a basis for $K/F$.
 E.g., if $k'^4 = \lambda_0 + \lambda_1k' + \lambda_2k'^2 + \lambda_3k'^3$, $\lambda_i\in F$, and $\alpha_1^3 +\alpha_1^2\alpha_2 \lambda_3 - \alpha_1\alpha_2^2 \lambda_2 + \alpha_2 \lambda_1 \neq 0$, then $1, kk', kk'^2, kk'^3$ is a basis for $K/F$.
 \\ (ii)  $k'=\alpha_1+\alpha_2k\in K^\times$ with $\alpha_i\in F$, $\alpha_1\not=0$, such that $1,k'k,k'k^2,k'k^{3}$ are linearly independent over $F$.
 \\ Then ${\rm Men}(K/F,1,k,k,kk')$ is a division algebra  over $F$.
\end{corollary}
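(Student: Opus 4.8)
The plan is to read this off from Theorem~\ref{thm:main1} and Corollary~\ref{cor:M1}, which were established in Section~\ref{sec:results} not just for cyclic but for \emph{arbitrary} abelian Galois extensions $K/F$. A biquadratic extension $K=F(\sqrt u,\sqrt v)$ is Galois of degree $4$ with ${\rm Gal}(K/F)\cong\mathbb{Z}/2\times\mathbb{Z}/2$, hence abelian, so those statements apply verbatim with $m=4$ to the algebra $(K/F,1,k,k,kk')$ (i.e.\ $k_0=1$, $k_1=k_2=k$, $k_3=kk'$). The whole task therefore reduces to checking that the linear-independence hypotheses in (i) and (ii) are exactly the ones feeding Theorem~\ref{thm:main1} and Corollary~\ref{cor:M1}.

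For part~(i): if $1,kk',kk'^2,kk'^3$ is already a basis of $K/F$, Theorem~\ref{thm:main1} applies directly and we are done. If instead $1,k',k'^2,k'^3$ is a basis, I would expand $1,kk',kk'^2,kk'^3$ in this basis using $k=\alpha_1+\alpha_2k'$ and the relation $k'^4=\lambda_0+\lambda_1k'+\lambda_2k'^2+\lambda_3k'^3$. The resulting transition matrix is the $m=4$ instance of the one displayed in the proof of Corollary~\ref{cor:M1}(i), and a short cofactor expansion shows its determinant equals $\pm(\alpha_1^3+\alpha_1^2\alpha_2\lambda_3-\alpha_1\alpha_2^2\lambda_2+\alpha_2\lambda_1)$; under the stated hypothesis this is nonzero, so $1,kk',kk'^2,kk'^3$ is again a basis and Theorem~\ref{thm:main1} gives the conclusion. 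This is the only computational step in the argument.

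For part~(ii): this is precisely Corollary~\ref{cor:M1}(ii) with $m=4$. Writing $\det M(x_0,\dots,x_3)=f_0+k'\sum_{j=1}^{3}k^jf_j$ with $f_0=N_{K/F}(x_0)$ and $f_j(0,x_1,\dots,x_{4-j},0,\dots,0)=(-1)^{(j+2)(4-j)}\alpha_1^{3-j}N_{K/F}(x_{4-j})$, the hypothesis that $1,k'k,k'k^2,k'k^3$ are linearly independent over $F$ forces all $f_j$ to vanish, and then, descending $j=3,2,1,0$, that $x_{4-j}=0$ in turn, whence $x_0=\dots=x_3=0$; so $(K/F,1,k,k,kk')$ has no zero divisors and is a division algebra. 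I expect no genuine obstacle here: the one point needing care is simply that Theorem~\ref{thm:main1} and Corollary~\ref{cor:M1}, together with their underlying determinant computations, were carried out for abelian rather than merely cyclic extensions, which Section~\ref{sec:results} supplies. The explicit determinant~(\ref{ExplicitDet2}) is not needed for this corollary, although as an alternative one could read the same argument off it directly by collecting terms according to the powers of $k'$ (respectively $k$), reproducing the hands-on computations of Proposition~\ref{prop:div} and Corollary~\ref{cor:M3} one dimension higher.
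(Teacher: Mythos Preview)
Your proposal is correct and matches the paper's approach: the paper gives no separate proof of this corollary, presenting it directly as the $m=4$ specialization of Theorem~\ref{thm:main1} and Corollary~\ref{cor:M1} (which, as you note, were proved for arbitrary abelian Galois extensions and hence apply to the biquadratic case). Your added detail on the transition-matrix determinant in part~(i) and the descent argument in part~(ii) simply unwinds what those results already establish.
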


\begin{example}\label{4x4Ex2} (Steele \cite{ST})
Pick $d \in K^\times$ such that $1, d, d^2, d^3$ are linearly independent over $F$, then  ${\rm Men}(K/F, d,1,1,1)$,
 ${\rm Men}(K/F, 1,d,1,1)$,
 ${\rm Men}(K/F, 1,1,d,1)$,
 ${\rm Men}(K/F, 1,1,1,d)$,
 ${\rm Men}(K/F, d,d,d,1)$,
 ${\rm Men}(K/F, d,d,1,d)$,
 ${\rm Men}(K/F, d,1,d,d)$,
 ${\rm Men}(K/F, 1,d,d,d)$ are division algebras.
The proof is straightforward, looking at the determinant.
\end{example}

\begin{example}\label{ex:Me}
Let us consider the special case  ${\rm Men}(K/F,1,1,1,c)$ with  multiplication  given by
\[M(x_0, x_1, x_2, x_3)=
\left( \begin{array}{cccc}
x_0 &  c\sigma(x_{3}) & c\tau(x_{2}) & c \sigma \tau(x_{1})\\
x_1 & \sigma(x_0)       & c \tau(x_{3})  &  c \sigma \tau(x_{2})\\
x_2 & \sigma(x_{1})    & \tau(x_0)       & c \sigma \tau(x_{3})\\
x_3 &  \sigma(x_{2})   &  \tau(x_1)      &  \sigma \tau(x_{0}) \\
\end {array}\right).
 \]
 This algebra is not a nonassociative $G$-crossed product algebra and is  a division algebra if $1,c,c^2,c^3$ are linearly independent over $F$ (Theorem \ref{thm:main1}). Let us look at the opposite algebra.  Observe that in the opposite algebra
 $z_2$ and $\sqrt{b}$ anti-commute and that  $z_2\cdot z_2 = d$. Thus the opposite algebra contains the nonassociative quaternion algebra $(F(\sqrt{b})/F, \tau, c)$  as a subalgebra, which is the classical quaternion algebra $(b,c)_F$ when $c\in F^\times$. Therefore
 ${\rm Men}(K/F,1,1,1,c)$ contains the associative algebra $(b,c)_F$ when $c\in F^\times$ and the proper nonassociative algebra $(F(\sqrt{b})/F, \tau, c)^{op}$  as a subalgebra when $c\in K\setminus F$. Both are crossed product algebras.
\end{example}

\subsection{Nonassociative crossed product algebras from biquadratic field extensions}

  Let again $char(F)\not=2$, $K = F(\sqrt{u}, \sqrt{v})$  with norm $N_{K/F}$, trace $T_{K/F}$, and Galois group
$G={\rm Gal}(K/F) = \{1, \sigma, \tau, \sigma \tau\},$ where
$\sigma(\sqrt{u}) = \sqrt{u},$ $\sigma(\sqrt{v}) = -\sqrt{v},$
$\tau(\sqrt{u}) = -\sqrt{u}$, $\tau(\sqrt{v}) = \sqrt{v}$ and $\tau_1=\sigma$, $\tau_2=\tau$, $\tau_3=\sigma\tau$.

\begin{example}\label{ex: cross}
Put $c_{\sigma,\tau}=c$ and $c_{g,h}=1_K$ for all other $g,h\in G$.
Then $A=(K, G,  {\bf c})=\bigoplus_{g \in G} K z_{g}$ is a nonassociative $G$-crossed product algebra
with the multiplication induced via
\[
(a z_g)(b z_h) =  a g(b) c_{g,h}\, z_{g\circ h}\text{ for all }
 a,b \in K,\ g,h \in G.
\]
This means in particular that
$$ z_\sigma z_\sigma  = 1 \text{ and } z_\tau z_\tau  = 1$$
while we would have
$$ z_\sigma z_\sigma  = z_{\tau} \text{ and } z_\tau z_\tau  =  z_{\sigma\circ \tau}$$
 for  ${\rm Men}(K/F,1,1,1,c)$ in Example \ref{ex:Me}.
The algebra $(K, G,  {\bf c})$ is an associative $G$-crossed product algebra if and only if $c\in F^\times$.
\end{example}

\begin{example}
Let
$(K, G,  {\bf c})=\bigoplus_{g \in G} K z_{g}$ be an associative $G$-crossed product algebra  and assume that $z_g z_h=1 z_{gh}$ for all unequal $g,h\in G$, and $z_\sigma z_\sigma =c_{\sigma,\sigma}=d\in F^\times$ and
$z_\tau z_\tau =c_{\tau,\tau}=e\in F^\times$.
 Then $(K, G,  {\bf c})=(a,d)_F\otimes_F (b,e)_F$ is a biquaternion algebra. 

If we choose the same $c_{g,h}$, but now choose $z_\tau z_\tau=c_{\tau,\tau}=e\in K\setminus F$, then
$$(K, G,  {\bf c})=(a,d)_F\otimes_F (F(\sqrt{b}),\tau,e)_F$$
 is a nonassociative $G$-product algebra and is the tensor product of the quaternion algebra $D_0=(a,d)_F$ and  the nonassociative
quaternion algebra $(F(\sqrt{b}),\tau,e)_F$. The nonassociative $G$-crossed product  algebra $(K, G,  {\bf c})$ is isomorphic to the Petit algebra $$D[t,\tau]/D[t,\tau](t^2-e)$$ where $D=D_0\otimes_F K$ and $\tau$ is the unique extension of $\tau$ to $D$. In this case, we know that $(K, G,  {\bf c})$ is a division algebra if $D$ is a division algebra and $t^2-e$ is indecomposable in $D[t,\tau]$ \cite{P66}.
\end{example}

\section{Linear Menichetti  codes} \label{sec:codes}

Let $S$ be a unital commutative ring. A  linear code of length $n$ over $S$ is a free submodule of the $S$-module $S^n$.
In order to be consistent with the existing terminology for skew constacyclic and more general skew polycyclic codes, we will  consider the opposite algebras of Menichetti algebras in this section. We will introduce the new  family of linear Menichetti codes that can be viewed as left ideals, in particular as principal left ideals, in their ambient algebras which are the opposite algebras of Menichetti algebras. When $G$ is cyclic, Menichetti codes can be seen as nonassociative $G$-crossed product  codes.

We begin by defining the opposite algebras of Menichetti algebras also over commutative unital base rings, slightly generalising the original definition of the algebra, also in terms of automorphism groups that are allowed.

 Let $S/S_0$ be an extension of commutative unital rings.
Suppose there are $n$ different $\tau_i\in {\rm Aut}_{S_0}(S)$ which form a finite abelian subgroup $G$ of ${\rm Aut}_{S_0}(S)$.
 Denote its elements by $\tau_i$ with $\tau_0=id$, so
$${\rm Gal}(K/F)=\{ \tau_0,\dots,\tau_{n-1} \}$$
i.e.  we choose to have $\tau_i$ represent one fixed element in $G$. We allow $G=\{id\}$ here, in which case, however, we will deviate slightly from our previous notation and consider the matrix
 \[ M(x_0,\dots,x_{n-1})=\left [\begin {array}{cccccc}
x_0 &  c_{{n-1},1}x_{n-1} & ... & c_{1,{n-1}} x_{1}\\
x_1 & x_0 & ... &  c_{2,{n-1}} x_{2}\\
x_2 & c_{1,1}x_{1} & x_0 & c_{3,{n-1}}x_{3}\\
...& ...  & ... & ...\\
x_{n-2} &  c_{n-3,1}x_{n-3}  & ...& c_{{n-1},{n-1}}x_{n-1}\\
x_{n-1} &  c_{n-2,1}x_{n-2}  & ...& x_{0}\\
\end {array}\right ]
 \]
in the definition of the algebra multiplication below, i.e., put $\tau_i=id$ for all $i \in \mathbb{Z}_n$ in the matrix $M$) and obtain an
$S$-algebra we will denote by ${\rm Men}_{op}(S,G, k_0,\dots,k_{n-1})$.

\begin{definition}
 Let  $k_i\in S^\times$ for $i\in\{0,\dots,n-1\}$,  put $c_{i,0}=c_{0,j}=1$ and put
$$c_{i,j}=k_0^{-1}k_1^{-1} \cdots    k_{j-1}^{-1}   k_ik_{i+1}\cdots k_{i+j-1} \text{ for all non-zero } i,j\in \mathbb{Z}_n.$$
Let $M(x_0,\dots,x_{n-1})$ be the matrix from Equation (\ref{equ:1}).
 The $S_0$-algebra ${\rm Men}_{op}(S/S_0,G, k_0,\dots,k_{n-1})$ defined on $S^n$ with multiplication
 $$(x_0,\dots,x_{n-1})\cdot  (y_0,\dots,y_{n-1})=(x_0,\dots,x_{n-1})M(y_0,\dots,y_{n-1})^t$$
 is called the  \emph{opposite algebra of a Menichetti algebra}.
\end{definition}

The multiplication in ${\rm Men}_{op}(S/S_0,G, k_0,\dots,k_{n-1})$ can also be described via
$$(az_i)\cdot  (bz_j)=\tau_i(a)b (z_i\cdot  z_j),$$
$$z_i\cdot  z_0=z_0\cdot  z_i =z_i \text{ for all } i\in\mathbb{Z}_n,$$
$$z_i\cdot  z_j=c_{i,j}z_{i+j} \text{ for all } i,j\in\mathbb{Z}_n\setminus \{0\}$$
for all $a,b\in K$, where again we read the indices modulo $n$.

The unital algebra ${\rm Men}_{op}(S/S_0,G, k_0,\dots,k_{n-1})$ is a free left $S$-module of rank $n$.

Suppose that $G=\langle \sigma\rangle$ is a non-trivial cyclic group and we define $\tau_i=\sigma^i$ for all $ i\in\mathbb{Z}_n\setminus \{0\}$. Then ${\rm Men}_{op}(S/S_0,G, k_0,\dots,k_{n-1})$ can be seen as a $G$-crossed product algebra (defined as before, but now over rings) with multiplication
$$(az_g)(b z_h)=h(a) b c_{h,g} z_{g\circ h}$$
for all $a, b\in K$, $g,h\in G$.

\begin{lemma}
Suppose that $G=\langle \sigma\rangle$ is a cyclic group and $\tau_i=\sigma^i$. Then  ${\rm Men}_{op}(S/S_0,G, k_0,\dots,k_{n-1})$ is associative if and only if
 $$c_{i,j}c_{i+j,k}=\tau_i(c_{j,k})c_{i,j+k}$$
 for all $i,j,k$.
\end{lemma}

The proof of this ``twisted co-cycle condition'' is a straightforward calculation.

\begin{proof}

If $(z_iz_j)z_k=z_i(z_jz_k) $ in ${\rm Men}_{op}(S/S_0, G, k_0,\dots,k_{n-1})$ then $(c_{i,j}z_{i+j})z_k= c_{i,j} c_{i+j,k} z_{i+j+k}$ equals
$z_i(z_jz_k)=z_i (c_{j,k}z_{j+k})=\tau_{i}(c_{j,k})z_i z_{j+k}=\tau_{i}(c_{j,k}) c_{i,j+k} z_{i+j+k}$, thus $c_{i,j} c_{i+j,k} z_{i+j+k}=\tau_{i}(c_{j,k}) c_{i,j+k} z_{i+j+k}$ which implies  $c_{i,j} c_{i+j,k} =\tau_{i}(c_{j,k}) c_{i,j+k}$ since the $z_i$ form a basis.
\\
Conversely, assume $c_{i,j} c_{i+j,k} =\tau_{i}(c_{j,k}) c_{i,j+k} $ in  ${\rm Men}(S/S_0, G, k_0,\dots,k_{n-1})^{op}$. Then for all $a,b,c\in S$ we have
$$((az_i)( b z_j)) (cz_k)= a\tau_i(b) c_{i,j}c_{i+j,k}\tau_i(\tau_j(c))z_{i+j+k}$$
and
$$(az_i) ((bz_j) (cz_k))= a\tau_i(b)\tau_i(\tau_j(c))\tau_i(c_{j,k})c_{i,j+k}z_{i+j+k}.$$
The first equation becomes
$$((az_i)( b z_j)) (cz_k)= a\tau_i(b) \tau_i(\tau_j(c)) \tau_{i}(c_{j,k}) c_{i,j+k}z_{i+j+k}$$
 by substituting in $c_{i,j} c_{i+j,k} =\tau_{i}(c_{j,k}) c_{i,j+k} $, so both are identical as $\tau_i\circ \tau_j=\tau_j\circ \tau_i$.
\end{proof}

 Define a module isomorphism
 $$\Phi:S^n\longrightarrow {\rm Men}_{op}(S/S_0,G, k_0,\dots,k_{n-1}),\quad (c_0,c_1,\dots,c_{n-1})\mapsto \sum_{i=0}^{n-1}c_iz_i.$$

   For a linear code $C$  of length $n$ over $S$, we denote by $C(z_0,\dots,z_{n-1})$ the set of elements
 $\sum_{i=0}^{n-1}c_i z_i\in {\rm Men}_{op}(S/S_0,G, k_0,\dots,k_{n-1})$ associated to the set of all codewords $(c_0,\dots,c_{n-1})\in C$ via $\Phi$.

 \begin{definition}
  A linear code $C$  of length $n$ over $S$ is called a  \emph{Menichetti code}  
 if the set of elements $C(z_0,\dots,z_{n-1})$ is a left ideal in the algebra
 ${\rm Men}_{op}(S/S_0,G, k_0,\dots,k_{n-1})$. The algebra ${\rm Men}_{op}(S/S_0,G, k_0,\dots,k_{n-1})$ is called the \emph{ambient algebra} of the Menichetti  code.
  \end{definition}

 Thus a Menichetti code
$C$ of length $n$ consist of all vectors $(h_0,\dots,h_{n-1})\in S^n$ obtained from the elements
 $\sum_{i=0}^{n-1}h_iz_i$ in a left  ideal  of ${\rm Men}_{op}(S/S_0,G, k_0,\dots,k_{n-1})$. The symmetric nature of the matrix
  \[ M(x_0,\dots,x_{n-1})=\left [\begin {array}{cccccc}
x_0 &  c_{{n-1},1}\tau_1(x_{n-1}) & ... & c_{1,{n-1}} \tau_{n-1}(x_{1})\\
x_1 & \tau_1(x_0) & ... &  c_{2,{n-1}} \tau_{n-1}(x_{2})\\
x_2 & c_{1,1}\tau_1(x_{1}) & \tau_2(x_0) & c_{3,{n-1}} \tau_{n-1}(x_{3})\\
...& ...  & ... & ...\\
x_{n-2} &  c_{n-3,1}\tau_1(x_{n-3})  & ...& c_{{n-1},{n-1}} \tau_{n-1}(x_{n-1})\\
x_{n-1} &  c_{n-2,1}\tau_1(x_{n-2})  & ...& \tau_{n-1}(x_{0})\\
\end {array}\right ]^t
 \]
 defining the right multiplication of ${\rm Men}_{op}(S/S_0,G, k_0,\dots,k_{n-1})$ implies some interesting symmetric behavior for
 Menichetti codes.

For the construction of Menichetti codes, it is not relevant whether the ambient algebra is associative or not.
However, we notice that there are no non-trivial two-sided ideals in ${\rm Men}_{op}(S/S_0, k_0,\dots,k_{n-1})$ when ${\rm Men}(S/S_0, k_0,\dots,k_{n-1})$ is a division algebra and $S$ a field, and a left ideal in ${\rm Men}_{op}(S/S_0, k_0,\dots,k_{n-1})$ could be two-sided in certain cases.
Therefore an important question is when a Menichetti algebra $A$ has non-trivial left ideals, and  when has only trivial left ideals (i.e., it yields only trivial codes). We know that $K$ lies in the left, middle and right nucleus.

\begin{lemma}
Let $A$ be a nonassociative algebra. If $g\in {\rm Nuc}_r(A)$ then $Ag$ is a principal left ideal in $A$.
If, additionally, $g\in {\rm Nuc}_m(A)\cap {\rm Comm}(A)$ then $Ag$ is a two-sided ideal.
\end{lemma}

\begin{proof}
If $g\in {\rm Nuc}_r(A)$ then $A(Ag)=(AA)g=Ag$. If $g\in {\rm Nuc}_m(A)\cap {\rm Comm}(A)$ then $(Ag)A=A(gA)=A(Ag)=(AA)g=Ag$ which proves the second assertion.
\end{proof}

A straightforward proof shows that for a  principal left ideal $Ag$ of $A$ we may assume w.l.o.g. that $g=\sum_{i=0}^{r}g_iz_i\in A$ is \emph{monic} in the sense that $g_{r}=1$, if $g_r$ is invertible in $S$ (e.g. when $S$ is a field since $g_r\not=0$).

By choosing the $c_{i,j}$ wisely, we can define Menichetti algebras with a large left nucleus, and then  build Menichetti codes associated to principal left ideals.

\begin{lemma} Suppose that $G=\langle \sigma\rangle$ is a cyclic group and we define $\tau_i=\sigma^i$.
Fix some  $r\in \{0,\dots,m-1\}$. Then $z_r\in {\rm Nuc}_r(A)$ if and only if $c_{i,j}\,c_{i+j,r} \;=\; \sigma^{i}(c_{j,r})\,c_{i,j+r}$ for all $i,j\in \mathbb{Z}_m$.
\end{lemma}

\begin{proof}
If
$z_r\in {\rm Nuc}_r(A)$ then $(z_i z_j)z_r=z_i(z_j z_r)$. Since
\[
(z_i z_j) z_r
= (c_{i,j} z_{i+j}) z_r
= c_{i,j} c_{i+j,r} z_{i+j+r},
\]
and
\[
z_i (z_j z_r)
= z_i (c_{j,r} z_{j+r})
= \sigma^i(c_{j,r}) ( z_i z_{j+r})
= \sigma^i(c_{j,r}) c_{i,j+r}  z_{i+j+r}.
\]
this implies
\[
c_{i,j}\,c_{i+j,r} = \sigma^{i}(c_{j,r})\,c_{i,j+r}
\]
for all $ i,j\in \mathbb{Z}_n$.

Conversely, suppose that we have
\[
c_{i,j}\,c_{i+j,r} = \sigma^{i}(c_{j,r})\,c_{i,j+r}
\]
 for all $i,j\in \mathbb{Z}_n$.  Let $y=\sum_{i=0}^{n-1} a_i z_i$ and $z=\sum_{j=0}^{n-1} b_j z_j$ with $a_i,b_j\in K$.
Then
\[
(y z) z_r
 = \Big(\sum_{i,j} a_i\,\sigma^i(b_j)\,c_{i,j}\,z_{i+j}\Big) z_r
 = \sum_{i,j\in\{0,\dots, n-1\}} a_i\,\sigma^i(b_j)\,c_{i,j}\,c_{i+j,r}\, z_{i+j+r}.
\]
On the other hand,
\[
y (z z_r)
=(\sum_{i=0}^{n-1} a_i z_i) \Big(\sum_{j=0}^{n-1} (b_j z_j)z_r\Big)
 = \sum_i a_i z_i \Big(\sum_j b_j c_{j,r} z_{j+r}\Big)
 = \sum_{i,j\in\{0,\dots, n-1\}} a_i\,\sigma^i(b_j c_{j,r})\,c_{i,j+r}\, z_{i+j+r}
\]
$$ = \sum_{i,j\in\{0,\dots, n-1\} } a_i\,\sigma^i(b_j)\,\sigma^i(c_{j,r})\,c_{i,j+r}\, z_{i+j+r}.$$
Hence $(y z) z_r = y (z z_r)$ for all $y,z\in A$.  Therefore $z_r\in N_r(A)$.
\end{proof}

\begin{definition}
 Let $d\in S^\times$. A Menichetti code $C$  of length $n$ over $S$ is called a $(G,d)$-constacyclic code, if the set of elements $C(z_0,\dots,z_{n-1})$ is a principal left ideal in the algebra
 ${\rm Men}_{op}(S/S_0,G, 1,\dots,1,d)$.
  \end{definition}

 \begin{example}\label{ex:cyclic}
 The ambient  algebra ${\rm Men}_{op}(S/S_0, G, 1,\dots,1,d)$ of a $(G,d)$-constacyclic code $C_G$ of length $n$ has the affiliated matrix
  \[ M(x_0,\dots,x_{n-1})^t=\left [\begin {array}{ccccccc}
x_0 &  d\tau_1(x_{n-1}) & d \tau_2(x_{n-2}) &... & d \tau_{n-1}(x_{1})\\
x_1 & \tau_1(x_0) & d \tau_2(x_{n-1})&... &  d \tau_{n-1}(x_{2})\\
x_2 & \tau_1(x_{1}) & \tau_2(x_0) & ... & d \tau_{n-1}(x_{3})\\
...& ...  & ... & ...\\
x_{n-2} &  \tau_1(x_{n-3})  & ...& ...& d \tau_{n-1}(x_{n-1})\\
x_{n-1} &  \tau_1(x_{n-2}) &    \tau_2(x_{n-3})  & ...& \tau_{n-1}(x_{0})\\
\end {array}\right ]^t
 \]
  Let $(a_0,\dots,a_{n-1})\in  C$. If $C_G$ over $S$ is a
   $(G,d)$-constacyclic code of lenght $n$, then $(a_0,\dots,a_{n-1})\in  C_G$ implies that also
   $$ ( d \tau_1(a_{n-1}),\tau_1(a_0),\dots,\tau_1(a_{n-2}))\in C_G,$$
   $$( d \tau_2(a_{n-2}), d \tau_2(a_{n-1}),\tau_2(a_0),\dots,\tau_2(a_{n-3}))\in C_G,$$
   $$( d \tau_3(a_{n-3}),  d \tau_3(a_{n-2}), d \tau_3(a_{n-1}),\tau_3(a_0),\dots,\tau_3(a_{n-4}))\in  C_G,\dots.$$
Compare this with a skew $(\sigma,d)$-constacyclic code $C$ over $S$ of length $n$ with ambient Petit algebra $S[t;\sigma]/S[t;\sigma](t^n-d)=(S/S_0,\sigma, d)$.  Here,
$$(a_0,\dots,a_{n-1})\in C \Rightarrow  ( d \sigma(a_{n-1}),\sigma(a_0),\dots,\sigma(a_{n-2}))\in  C,$$
  $$ ( d \sigma^2(a_{n-2}), d \sigma^2(a_{n-1}),\sigma^2(a_0),\dots,\sigma^2(a_{n-3}))\in  C,$$
  $$ ( d \sigma^3(a_{n-3}), d \sigma^3(a_{n-2}), d \sigma^3(a_{n-1}),\sigma^3(a_0),\dots,\sigma^3(a_{n-4}))\in  C,\dots.$$
  Since  $(G,d)$-constacyclic codes allow for abelian noncyclic automorphism groups in their definition, there now is more variety for code design.

  Suppose that $G=\langle \sigma \rangle$ is a cyclic group. Then a $(G,d)$-constacyclic code of length $n$ is a skew $(\sigma,d)$-constacylic code  with ambient Petit algebra
  $$S[t;\sigma]/S[t;\sigma](t^n-d)=(S/S_0, \sigma, d)={\rm Men}_{op}(S/S_0, G, 1,\dots,1,d)$$
  which is a nonassociative $G$-crossed product algebra (a nonassociative cyclic algebra over $S_0$).
   Thus $(G,d)$-constacyclic codes are generalisations of those $(\sigma,d)$-constacylic codes which are in one-one correspondence with the principal left ideals in a nonassociative crossed product algebra $S[t;\sigma]/S[t;\sigma](t^n-d)$. For associative crossed product algebras this was observed in \cite{GM}.
\end{example}

 \begin{example}
 Let  $G=\{id\}$. Then the ambient  algebra ${\rm Men}_{op}(S, \{id\}, 1,\dots,1,d)$ of a $(G,d)$-constacyclic code of length $n$  has the affiliated matrix
  \[ M(x_0,\dots,x_{n-1})^t=\left [\begin {array}{ccccccc}
x_0 &  d x_{n-1} & d x_{n-2} &... & d x_{1}\\
x_1 & x_0 & d x_{n-1}&... &  d x_{2}\\
x_2 & x_{1} & x_0 & ... & d x_{3}\\
...& ...  & ... & ...\\
x_{n-2} &  x_{n-3}  & ...& ...& d x_{n-1}\\
x_{n-1} &  x_{n-2} &    x_{n-3}  & ...& x_{0}\\
\end {array}\right ]^t
 \]
  Thus an $(\{id\},d)$-constacyclic code of length $n$ is a classical constacylic code  with ambient associative quotient algebra $S[t]/S[t](t^n-d)={\rm Men}_{op}(S, \{id\}, 1,\dots,1,d)$, an algebra over $S$.
\end{example}

We  consider the special case that the left ideal is principal.

\begin{theorem}\label{thm:newTheorem3.2}
Let $A={\rm Men}_{op}(S/S_0,G, k_0,\dots,k_{n-1})$.  We identify $g=\sum_{i=0}^{r}g_iz_i\in {\rm Nuc}_r(A)$
with the vector $(g_0,\dots,g_{r},0,\dots,0)\in S^n$. Let $C$ be the Menichetti code defined by the left principal ideal $Ag$.
\\ (i)  If $(a_0,\dots,a_{n-1})\in C$ then
$$( \tau_i(c_{n-i}) c_{i,n-i}  ,  \tau_i(c_{n-i+1}) c_{i,n-i+1} ,\dots, \tau_i(c_0), \tau_i(c_1) c_{i,1} z_{i+1}, \tau_i(c_2) c_{i,1} z_{i+2},\dots, \tau_i(c_{n-1-i}) c_{i,n-1-i} ) \in  C$$
for all $i\in \{1,\dots,n \}$.
 Here, we have a shift: $ \tau_i(c_0) $ is in the $i$th position, $\tau_i(c_1) c_{i,1} $ in position $i+1$ etc.
\\ (ii)
 The matrix
 $$G= \left(
 \begin{array}{cccccccc}
 g_{0} &g_{1} &        \cdots&          g_{r}& 0 &\cdots &\cdots & 0\\
 0 & \tau_1(g_{0}) & \tau_1(g_{1})c_{1,1} &\cdots & \tau_{1}(g_{r-1})c_{1,r-1} & 0 &\cdots & 0\\
 \vdots &\ddots &\ddots &\ddots & &\ddots & &\vdots\\
 \vdots & &\ddots &\ddots &\ddots & &\ddots &\vdots\\
 0 &\ldots & &0 & \tau_{n-1-r}(g_{0}) & \tau_{n-1-r}(g_{1})c_{n-1-r,1} &\ldots & \tau_{n-1-r}(g_{{n-k}})c_{n-1-r,1} \\
 \end{array}
 \right)
 $$ 
  generating $C$ represents the right multiplication $R_g$ with $g$ in $A$,
calculated with respect to the basis $1,z,\dots,z_{n-1}$.
\\ (iii)
The $(G,d)$-constacyclic code $C\subset S^n$ corresponding to the principal left ideal
$Ag $ is a free left $S$-module of dimension $k=n-r$ with generator matrix
$$
 G= \left(
 \begin{array}{cccccccc}
 g_{_0} &g_{_1} &\cdots&g_{_{n-k}}& 0 &\cdots &\cdots & 0\\
 0 & \tau_1(g_{_0}) & \tau_1(g_{_1}) &\cdots & \tau_{1}(g_{_{n-k}}) & 0 &\cdots & 0\\
 \vdots &\ddots &\ddots &\ddots & &\ddots & &\vdots\\
 \vdots & &\ddots &\ddots &\ddots & &\ddots &\vdots\\
 0 &\ldots & &0 & \tau_{k-1}(g_{_0}) & \tau_{k-1}(g_{_1}) &\ldots & \tau_{k-1}(g_{_{n-k}})\\
 \end{array}
 \right)
 $$
calculated with respect to the basis $1,z,\dots,z_{n-1}$.
\end{theorem}

\begin{proof}
Let $Ag$ be the left ideal generated by $g=\sum_{i=0}^r g_iz_i$.
\\ (i) Let $c=(c_0,c_1,\dots,c_{n-1}) \in  C$. Then by definition $\sum_{j=0}^{n-1}c_jz_j\in Ag$, and so also $z_i \sum_{j=0}^{n-1}c_jz_j\in Ag$ for all $z_i$, $i\in \{1,\dots,n \}$.
We compute
$$z_i \sum_{j=0}^{n-1}c_jz_j= \sum_{j=0}^{n-1}z_i c_jz_j= \sum_{j=0}^{n-1} \tau_i(c_j) z_iz_j= \sum_{j=0}^{n-1} \tau_i(c_j) c_{i,j} z_{i+j},$$
 hence
also
 $$( \tau_i(c_{n-i}) c_{i,n-i}  ,  \tau_i(c_{n-i+1}) c_{i,n-i+1} ,\dots, \tau_i(c_0), \tau_i(c_1) c_{i,1} z_{i+1}, \tau_i(c_2) c_{i,1} z_{i+2},\dots, \tau_i(c_{n-1-i}) c_{i,n-1-i} ) \in  C.$$
 Here, $ \tau_i(c_0) $ is in the $i$th position, $\tau_i(c_1) c_{i,1} $ in position $i+1$ etc.
 \\
E.g., $(  \tau_1(c_{n-1}) c_{1,n-1} , \tau_1(c_0) c_{1,0} ,\dots,  \tau_1(c_{n-2}) c_{1,n-1} ) \in  C$.
\\ (ii)
Since $Ag$ is a left ideal, $z_ig\in Ag$ for all $i\in \{1,\dots,n-1\}.$
It is easy to see that $g,z_1g,z_2g,\dots, z_{n-r-1}g$ are linearly independent over $S$, thus form a basis. The rows of the matrix generating $ C$ are thus given by the vectors corresponding to $g,z_1g,z_2g,\dots, z_{n-r-1}g$. We obtain
a generator matrix $ G $ of $ C$ as
 \begin{equation}\label{eq10}
 G= \left(
 \begin{array}{cccccccc}
 g_{_0} &g_{_1} &\cdots&g_{_{n-k}}& 0 &\cdots &\cdots & 0\\
 0 & \tau_1(g_{_0}) & \tau_1(g_{_1}) &\cdots & \tau_{1}(g_{_{n-k}}) & 0 &\cdots & 0\\
 \vdots &\ddots &\ddots &\ddots & &\ddots & &\vdots\\
 \vdots & &\ddots &\ddots &\ddots & &\ddots &\vdots\\
 0 &\ldots & &0 & \tau_{k-1}(g_{_0}) & \tau_{k-1}(g_{_1}) &\ldots & \tau_{k-1}(g_{_{n-k}})\\
 \end{array}
 \right)
 \end{equation}
 where $k=n-r $. 
 Moreover, we just showed that the $(G,d)$-constacyclic code $ C\subset S^n$ corresponding to the principal left ideal
$Ag $ is a free left $S$-module of dimension $n-r$.
\\ (iii) follows as special case from (ii).
\end{proof}

The above Theorem means that the set of vectors corresponding to the elements $ \{ g,z_1 g,\ldots,z_{k-1}g\} $ in $A$ forms a basis of $C$ and the dimension of $   C $ is $ k=n-r. $
It remains to see how two generators $g$ and $g'$ of the same principal left ideal in
${\rm Men}_{op}(S/S_0,G, k_0,\dots,k_{n-1})$ are related.

In order to find good Menichetti codes, we will determine the ideals and the Hamming preserving automorphisms and isomorphisms between these algebras with emphasis on the princical left ideals. Even when we choose $\sigma=id$ we will obtain new codes which are  generalisation of constacyclic codes with ambient algebra $S[t]/S[t](t^n-a)$.

If $S$ is a finite field then all Galois extensions $K$ of $F$ will be cyclic, hence $G$ will be a cyclic group. Nonetheless, we still obtain new linear  codes over $K$ as not all ambient Menichetti algebras are Petit algebras, thus not all Menichetti codes will be SPC codes.

 The Hamming metric is used to evaluate the efficiency of a code. We thus aim to
classify large families of Menichetti codes up to isometry and equivalence analogous as in \cite{Pum2025} and find those with minimum Hamming distance. For associative $G$-crossed product ambient algebras, the isometries were determined in \cite{GM}, for the associative $G$-crossed product ambient algebras $(K/F,\sigma,d)$ in \cite{NevPum2025, Pum2025}.
We will study
$(G,d)$-constacyclic codes in depth in a future paper \cite{Pum25.2}.

{\emph Acknowledgement:} The initial draft of this paper was written in 2024 while the author was a visitor at the University of Ottawa. She would like to thank the Department of Mathematics and Statistics for its hospitality, and M Nevins for many fruitful discussions.



\begin{thebibliography}{1}

\bibitem{A1} A. A. Albert, \emph{Non-associative algebras: II. New simple algebras.}
Annals of Mathematics, 43 (4) (1942), 708–723.


\bibitem{BHMRV} G.~Blachar, D.~Haile, E.~Matzri, E.~Rein, U.~Vishne, \emph{Semiassociative algebras over a field}.
Journal of Algebra 649 (1) 2024, 35-84.
\verb#https://doi.org/10.1016/j.jalgebra.2024.03.006#

\bibitem{BP18}  C. Brown, S. Pumpl\"un,  \emph{The automorphisms of Petit's algebras.}
 Comm. Algebra    46 (2) (2018), 834-849. 

\bibitem{BP19}  C. Brown, S. Pumpl\"un, \emph{Nonassociative cyclic extensions of fields and central simple algebras}.
 J. Pure Applied Algebra 223 (6) (2019), 2401--2412.

\bibitem{DW21} J. De La Cruz and W. Willems, "Twisted Group Codes," in IEEE Transactions on Information Theory 67 (8),  5178-5184, Aug. 2021,

\bibitem{DS} S. T. Dougherty, S. Sahinkaya, B. Yildiz, \emph{Skew $G$-codes.}
Journal of Algebra and Its Applications 22 (02) (2021)

\bibitem{G} M. Guerreiro,  \emph{Group algebras and coding theory}. São Paulo J. Math. Sci. 10 (2) (2016),  346–371.

\bibitem{GM}  Y. Ginosar, A. Rochas Moreno, \emph{Crossed Products and Coding Theory.}  IEEE Transactions on Information Theory 65, no. 10, 6224-6233, Oct. 2019. 

\bibitem{J96} N.~Jacobson,
``Finite-dimensional division algebras over fields.'' Springer Verlag,
Berlin-Heidelberg-New York, 1996.


\bibitem{M} G.~Menichetti,  \emph{Sopra una classe di quasicorpi distributivi di ordine finito.}
Atti Accad. Naz. Lincei Rend. Cl. Sci. Fis. Mat. Natur. (8), 59 (5) (1975), 339-348.

\bibitem{NevPum2025}  M. Nevins, S. Pumpl\"un,  \emph{When isometry and equivalence for skew constacyclic codes coincide.}
\verb#arXiv:2508.06695v1# [cs.IT]

\bibitem{OeLun} J. \"Oinert, P. Lundstr\"om,
     \emph{Commutativity and ideals in category crossed products},
   Proc. Est. Acad. Sci.,
    59 (4)  (2010), 338-346.

\bibitem{P66} J.-C.~Petit, \emph{Sur certains quasi-corps g\'{e}n\'{e}ralisant un type d'anneau-quotient}.
 S\'{e}minaire Dubriel. Alg\`{e}bre et th\'{e}orie des nombres 20 (1966-67), 1-18.


\bibitem{Pum25.2} S. Pumpl\"un,  \emph{Codes from Menichetti algebras}. In preparation.


\bibitem{Pum2017} S. Pumpl\"un, \emph{Finite nonassociative algebras obtained from skew polynomials  and possible applications to $(f,\sigma,\delta)$-codes}.
Advances in Mathematics of Communications  \textbf{11} (3) (2017), 615-634.

\bibitem{Pum2025}  S. Pumpl\"un,\emph{Using nonassociative algebras to classify skew polycyclic codes up to isometry and equivalence}.
\verb#https://doi.org/10.48550/arXiv.2508.10139# [cs.IT]

  \bibitem{SU25} S. Sahinkaya, D. Ustun, \emph{Skew G-codes over $\mathbb {F}_4[u,v]/\langle u^2, v^2, uv-vu \rangle $ and new binary entanglement-assisted quantum error-correcting codes.} Quantum Information Processing 24 198 (2025)



\bibitem{St} A.~Steele, \emph{Nonassociative cyclic algebras.} Israel J. Math. 200 (2014), 361–387.
https://doi.org/10.1007/s11856-014-0021-7

\bibitem{ST} A.~Steele, \emph{Some new classes of division algebras and potential applications to space-time block coding}. PhD Thesis, University of Nottingham (2013).
    \verb#https://eprints.nottingham.ac.uk/13934/~#

\bibitem{W} W.~C.~Waterhouse, \emph{Nonassociative quarternion algebras.} Algebras Groups Geom.
4 (3), 365–378, 1987.






\end{thebibliography}
\end{document}